\newtheorem{theorem}{Theorem}[section]
\newtheorem{lemma}[theorem]{Lemma}
\numberwithin{equation}{section}
\title{Solving Inverse Obstacle Scattering Problem with Latent Surface Representations\thanks{The work of J. Chen is supported by National Key R\&D Program of China 2019YFA0709600, 2019YFA0709602. The work of B. Jin is supported by Hong
Kong General Research Fund (Project No. 14306423), and a start-up fund from The Chinese University of Hong Kong. }}
\author{Junqing Chen\thanks{Department of Mathematical Sciences, Tsinghua University, Beijing 100084, P.R. China.  (\texttt{jqchen@tsinghua.edu.cn}, \texttt{liuhb19@mails.tsinghua.edu.cn}).
} \and Bangti Jin\thanks{Deaprtment of Mathematics, The Chinese University of Hong Kong, Shatin, New Territories, Hong Kong SAR, P.R. China (\texttt{b.jin@cuhk.edu.hk}, \texttt{bangti.jin@gmail.com})} \and Haibo Liu\footnotemark[2]}
\begin{document}

\maketitle

\begin{abstract}
We propose a novel iterative numerical method to solve the three-dimensional inverse obstacle scattering problem of recovering the shape of an obstacle from far-field measurements. To address the inherent ill-posed nature of the inverse problem, we advocate the use of a trained latent representation of surfaces as the generative prior. This prior enjoys excellent expressivity within the given class of shapes, and meanwhile, the latent dimensionality is low, which greatly facilitates the computation. Thus, the admissible manifold of surfaces is realistic and the resulting optimization problem is less ill-posed. We employ the shape derivative to evolve the latent surface representation, by minimizing the loss, and we provide a local convergence analysis of a gradient descent type algorithm to a stationary point of the loss. We present several numerical examples, including also backscattered and phaseless data, to showcase the effectiveness of the proposed algorithm.\\
\textbf{Key words}: inverse obstacle scattering problem, latent surface representation, shape derivative, convergence analysis
\end{abstract}


\section{Introduction}

Inverse scattering problems are concerned with determining the nature of an unknown scattering phenomenon (e.g., shape, position, size, and medium properties) from the knowledge (measurement) of the acoustic/electromagnetic/elastic scattered fields. Inverse scattering represents an important class of mathematical problems arising in science and engineering and has found many important real-world applications, e.g., remote sensing (radar and sonar), nondestructive evaluation, geophysical exploration, security check, through-wall imaging, biomedical imaging, and diagnosis \cite{colton1998inverse}. This work focuses on the inverse obstacle scattering problem (ISP), of recovering the scatterer support from sparse and noisy far-field measurements. 

Due to its broad range of applications, many numerical algorithms have been proposed to solve the ISP over the past several decades. Roughly they can be divided into two categories, i.e., iterative methods and direct methods. Iterative methods typically first formulate ISP into PDE-constrained optimization problems via variational regularization \cite{ItoJin:2015}, representing the surface parametrically or nonparametrically (e.g., level set method or phase-field approximation) \cite{audibert2022accelerated,dorn2006level}. The resulting optimization problems are then minimized via an iterative algorithm, e.g., gradient descent \cite{audibert2022accelerated} and Newton method \cite{hettlich1999second,Hohage:1999}. 
Iterative methods can often yield accurate reconstructions but are computationally demanding, due to the need of repeatedly solving the direct and adjoint problems. In sharp contrast,  direct methods, e.g., linear sampling method \cite{ColtonKirsch:1996}, reverse time migration \cite{Chen_2013, Chen_2013_1}, factorization method, multiple signal classification (MUSIC) \cite{kirsch2002music}, direct sampling method (DSM) \cite{ito2013direct,LiLiLiuLiu:2015}, and topological derivative analysis \cite{AmmariGarnier:2012,feijoo2004new}, aim at providing an estimate of the scatterer support non-iteratively, e.g., via the probing function in the direct sampling method; see the monographs \cite{CakoniColton:2006, KirschGrinberg:2008} for the mathematical theory for these methods. Generally, these methods often provide only rough estimates of the scatterer support but can be done much more efficiently than iterative methods. One can also combine direct methods with iterative ones, e.g., using the rough estimate from the direct sampling method as the initial guess in an iterative method \cite{ItoJinZhou:2013,li2020extended}.  The reconstruction algorithm proposed in this work belongs to the first class. 

While being highly flexible and capable of producing accurate results, existing iterative methods are somehow still very involved when applied to practical scenarios due to the ill-posed nature of ISP, especially in the three-dimensional case. For example, for level set based methods, solving related evolution equations is computationally intensive, and regularization has to be incorporated carefully so as to control the complexity of the surface to remedy the ill-posedness of ISP; existing studies have explored sparsity regularization \cite{winters2009sparsity}, regularized sampling method  \cite{colton2000regularized} and topological prior \cite{carpio2020bayesian}. Nonetheless, there remains a great demand for further developing iterative techniques with improved accuracy and faster convergence.

In this work, we aim to explore the potential of using deep neural networks (DNNs) as a prior constraint, to address the inherent ill-posed nature of ISP. We develop a novel reconstruction algorithm based on the latent surface representation. The latent representation, called deepSDF (deep signed distance function), is of relatively low dimensionality and learned from the training data (e.g., point clouds) in a supervised manner. Indeed modern generative models, e.g., variational autoencoder \cite{kingma2013auto}, generative adversarial network \cite{goodfellow2014generative} and score-based diffusion models \cite{song2020score}, allow effectively extracting a high-fidelity representation of the manifold of shapes from the training data that can subsequently be used as a powerful prior in an iterative reconstruction scheme. We combine the latent surface representation with shape derivative, to efficiently evolve the surface via any stand-alone optimization method. We analyze the local convergence behavior of a gradient descent type scheme (i.e. ADAM \cite{KingmaBa:2015}) to a stationary point of the objective functional, under certain assumptions on the problem setting. We present several numerical illustrations, including backscattering and phaseless data, to show the effectiveness of the approach. Numerically we observe that the method is highly efficient and enjoys outstanding robustness with respect to the data noise. Indeed, the algorithm converges within tens of iterations, and remarkably, it can still yield reasonable approximations for up to 40\% noise in the data. In sum, there are several distinct features of the proposed approach. First, it can avoid the tedious computation of evolution equations when compared with level set-based methods. Second, the strong learned prior significantly reduces the dimensionality of the variable to be optimized. This significantly speeds up the convergence of the optimization algorithm. Third and last, the latent representation is highly expressive for a class of complex surfaces, e.g., planes and geological structures, which are challenging for application-agnostic priors, e.g., sparsity and total variation, to capture. This makes the prior ideally suited to the specific application scenarios of ISP.

Last let us put this work into the context of solving ISP using deep learning. In the last few years, deep learning has greatly impacted ISP. Motivated by direct imaging methods, the work \cite{wei2018deep} proposed three effective supervised methods, i.e., direct inversion scheme, back-propagation scheme, and dominant current scheme using U-Net \cite{RonnebergerFischer:2015}. Xu et al \cite{xu2020deep} proposed a deep learning method for the inversion of ISP with different schemes for the input of the neural network. Khoo and Ying \cite{khoo2019switchnet} proposed a novel neural network architecture, termed SwitchNet, to solve the forward and inverse scattering problems, by building mathematical insights into the architecture design. Likewise, Fan and Ying \cite{fan2019solving} proposed to interpolate Green's function with a neural network, utilizing especially the hierarchical low-rank structure of Green's function. Inspired by \cite{guo2021construct}, Ning et al \cite{NingHanZou:2023} proposed using DNNs to enhance direct sampling methods to solve ISP. We refer interested readers to \cite{chen2020review} for an overview of deep learning approaches to inverse scattering problems. All the aforementioned approaches are supervised in nature, and require abundant paired training data, in order for them to perform well.  Thus they differ markedly from the present work which uses the latent surface representation as a prior. The work \cite{guillard2021deepmesh} has used the latent representation to solve shape optimization problems, but the forward problem is modeled with a neural network. This makes heavy regularization crucial, and thus the optimized shape can only differ from the initial guess slightly.

The rest of the paper is organized as follows. In Section \ref{problem}, we describe the mathematical formulation of the inverse scattering problem. Then in Section \ref{method}, we present the DeepSDF surface representation and develop the algorithm for the ISP. In Section \ref{analysis}, we discuss one related theoretical issue, i.e., convergence guarantee of a gradient type method to a stationary point of the loss. Finally, in Section \ref{results} we present extensive numerical experiments to illustrate the effectiveness of the approach. We conclude the paper with further remarks in Section \ref{conclusion}.

\section{Inverse obstacle scattering problem}\label{problem}
Let $\Omega \subset \mathbb{R}^3$ be an open and bounded domain with a $C^2$ boundary $\Gamma$. Let $u^i$ be the incident plane wave, given by 
\begin{eqnarray}\label{incident}
    u^i(x)=e^{ikx\cdot d}, \quad x\in \mathbb{R}^3,
\end{eqnarray}
where $k > 0$ is the wave number and $d \in  \mathbb{S}^2$ the incident direction. Consider the following Helmholtz equation with a Dirichlet boundary condition
\begin{subequations}\label{hel}
    \begin{align}
        \Delta u + k^2 u  &=0,  \quad \mbox{in }\mathbb{R}^3 \backslash \Omega ,\label{hel1}\\
        u &= 0, \quad \mbox{on } \Gamma, \label{hel2}\\
        \frac{\partial u^s}{\partial r}-iku^s & =o(r^{-1}),\quad \mbox{as }r\to\infty,\label{hel3}
    \end{align}
\end{subequations}
where $r=\|x\|_2$ ($\|\cdot\|_2$ denotes the Euclidean norm of a vector), $u^s$ is the scattered field, and $u=u^i+u^s$ the total field. In the system \eqref{hel}, \eqref{hel1} is the Helmholtz equation, \eqref{hel2} the sound-soft (Dirichlet) boundary condition, and
\eqref{hel3} the Sommerfeld radiation condition. It is well known that the system \eqref{hel} has a unique solution \cite{nedelec2001acoustic}. The
radiation condition implies the following asymptotic behavior at infinity
\begin{eqnarray*}
    u(x)=\frac{e^{ikr}}{r}\Big\{u^\infty(\hat{x},d)+  \mathcal{O}\Big(\frac{1}{r}\Big)\Big\}, \quad \mbox{as } r\rightarrow \infty,
\end{eqnarray*}
where $\hat{x}=x/\|x\|_2$ is the direction of $x$, $u^\infty(\hat{x},d)$ is known as the far field pattern of $u$. It is well known that $u^\infty$ is an analytic function on the unit sphere $\mathbb{S}^2$.

Below we employ an integral equation formulation of the scattering problem \eqref{hel}. Recall that the fundamental solution $\Phi (x, y)$ of the Helmholtz equation is given by
\begin{eqnarray*}
    \Phi(x,y)=\frac{1}{4\pi}\frac{e^{ik\|x-y\|_2}}{\|x-y\|_2}, \quad x\neq y \in \mathbb{R}^3.
\end{eqnarray*}
Then we define the associated single-layer and double-layer potential operators $\mathcal{S}$ and $\mathcal{K}$ by
\begin{eqnarray*}
    (\mathcal{S}\varphi)(x)=\int_{\Gamma}\Phi(x,y)\varphi(y){\rm d}s(y)\quad \mbox{and}\quad
    (\mathcal{K}\varphi)(x)=\int_{\Gamma}\frac{\partial \Phi(x,y)}{\partial \nu(y)}\varphi(y){\rm d}s(y),
\end{eqnarray*}
respectively, and also the normal derivative operator $\mathcal{K}'$ by
\begin{eqnarray*}
    (\mathcal{K}'\varphi)(x)=\int_{\Gamma}\frac{\partial \Phi(x,y)}{\partial \nu(x)}\varphi(y){\rm d}s(y).
\end{eqnarray*}
Since the plane incident wave $u^i$ in \eqref{incident} satisfies the Helmholtz equation in the whole $\mathbb{R}^3$ and the total field $u$ and $u^s$ satisfy the exterior Helmholtz problem, we deduce that the normal derivative $v:=\partial u / \partial \nu$ satisfies the so-called  Burton-Miller combined boundary integral equations \cite[p. 59]{colton1998inverse}
\begin{eqnarray}\label{bie}
    v+\mathcal{K}^\prime
    v-ik\mathcal{S}v=2\frac{\partial u^i}{\partial\nu}-2iku^i,\quad\mbox{on }\Gamma.
\end{eqnarray}
Equation \eqref{bie} lays the foundation of a numerical treatment of problem \eqref{hel} via the boundary element method. Using  $v$, the scattered field $u^s$ on the exterior region $\mathbb{R}^3 \backslash \Omega$ is given by
\begin{eqnarray*}
    u^s(x)=-\frac{1}{4\pi}\int_{\Gamma} \Phi(x,y) v(y) {\rm d}s(y), \quad x \in \mathbb{R}^3 \backslash \Omega.
\end{eqnarray*}
The far filed pattern $u^\infty_{\Omega}(\hat{x},d)$ admits the following integral representation \cite{colton1998inverse}:
\begin{equation}
    u^\infty_{\Omega}(\hat{x},d)=
    \frac{1}{4\pi}\int_{\Gamma} \frac{\partial e^{-ik\hat{x}\cdot y}}{\partial \nu}(y)u^s(y) - \frac{\partial u^s}{\partial \nu}(y)e^{-ik\hat{x}\cdot y} {\rm d}s(y),\label{ffield} 
\end{equation}
where the subscript $\Omega$ indicates the dependence of  $u^\infty$ on the domain $\Omega$. ISP aims to reconstruct the obstacle boundary $\Gamma$, from a knowledge of data related to the far field pattern $u^\infty$. Throughout it is assumed that we can access the far field pattern at several measurement directions $\hat{x}_m, m = 1,...,M$, and for several incident plane waves \eqref{incident} with directions $d_l,l = 1,...,L$. 

Following the standard output least-squares formulation, one potential reconstruction algorithm \cite{colton1998inverse} for the ISP can be described as a shape optimization problem that looks for a minimizer $\Omega$ over the set of smooth domains, using the cost functional
\begin{equation}\label{loss}
    J(\Gamma)=\frac{1}{2LM} \sum_{l=1}^L \sum_{m=1}^M|u^\infty_{\Omega}(\hat{x}_m,d_l)-u^\infty_{\Omega^\star}(\hat{x}_m,d_l)|^2,
\end{equation}
where $\Omega^\ast$ denotes the true obstacle. 
This work builds on the output least-squares formulation but with one key contribution, of representing the domain $\Omega$ implicitly via a deep latent surface representation.

\section{Reconstruction of the surface via latent representation}\label{method}

\subsection{Latently represented surfaces}
To solve 3D ISP, the representation of the surface is key to the success of any imaging algorithm. In existing studies, implicit representation methods are dominant. A 3D Surface $S$ can be implicitly represented by the signed distance function $SDF_S(x):\mathbb{R}^3 \rightarrow \mathbb{R}$, which is a continuous function that, for a given spatial point $x\in \mathbb{R}^3$, outputs its distance to the surface $S$, whose sign encodes whether the point is inside (negative) or outside (positive) of the watertight surface:
\begin{eqnarray*}
    SDF_S(x)=\left\{
    \begin{aligned}
        d(x, S),& \quad\text{ if $x$ is outside},\\
        -d(x, S),& \quad\text{ if $x$ is inside},
    \end{aligned}
    \right.
\end{eqnarray*}
where $d$ is the Euclidean distance. A surface $S$ is implicitly represented by the iso-surface of $SDF_S(\cdot) = 0$ and can be obtained with the marching cube algorithm \cite{lorensen1987marching}.

Modern representation learning techniques aim at automatically discovering a set of features that can compactly but expressively describe the training data. Park et al \cite{park2019deepsdf} proposed a deep neural network (DNN), termed DeepSDF, to directly regress the continuous $SDF(x)$ from point cloud samples, and introduced conditioning on a latent vector to model multiple SDFs with one single neural network. The trained network can predict the $SDF$ value of any query position, from which we can extract the zero level-set surface by evaluating spatial samples. Specifically, the construction proceeds as follows. Given a dataset of surfaces $\mathbb{D}=\{S\}$. For each surface $S\in \mathbb{D}$, a latent vector $z_S\in \mathbb{R}^Z$ is defined in DeepSDF with $Z$ being the dimension of the latent vector. Then one trains a neural network $f_\theta(z,x):\mathbb{R}^Z\times \mathbb{R}^3 \rightarrow \mathbb{R}$ to approximate $SDF(x)$ over the set $\mathbb{D}$ by minimizing the following loss \cite{park2019deepsdf}:
\begin{eqnarray}
    L_{\rm imp}(\{z_S\}_{S \in \mathbb{D}}, \theta)=\sum_{S\in \mathbb{D}}\frac{1}{|X_S|}\sum_{x\in X_S}\|f_\theta(z_S,x)-SDF_S(x)\|_1+\lambda\sum_{S\in \mathbb{D}}\|z_S\|_2^2,\label{deepsdfloss}
\end{eqnarray}
where $\theta$ denotes the vector of trainable DNN parameters, $X_S$ denotes the set of 3D sample points on the surface $S$ (and around it due to the presence of noise), and $\lambda>0$ is the penalty weight. See Figure \ref{fig:deepSDF} for a schematic illustration of DeepSDF architecture.
The choice of the $\ell^1$-norm $\|\cdot\|_1$ of the loss function \eqref{deepsdfloss} makes the learned SDF sharp and and likewise the $\ell^2$-norm makes the label vector smooth.
\begin{figure}[htb]
    \centering
    \includegraphics[scale=0.34, trim = {1cm 6cm 2cm 5cm}, clip]{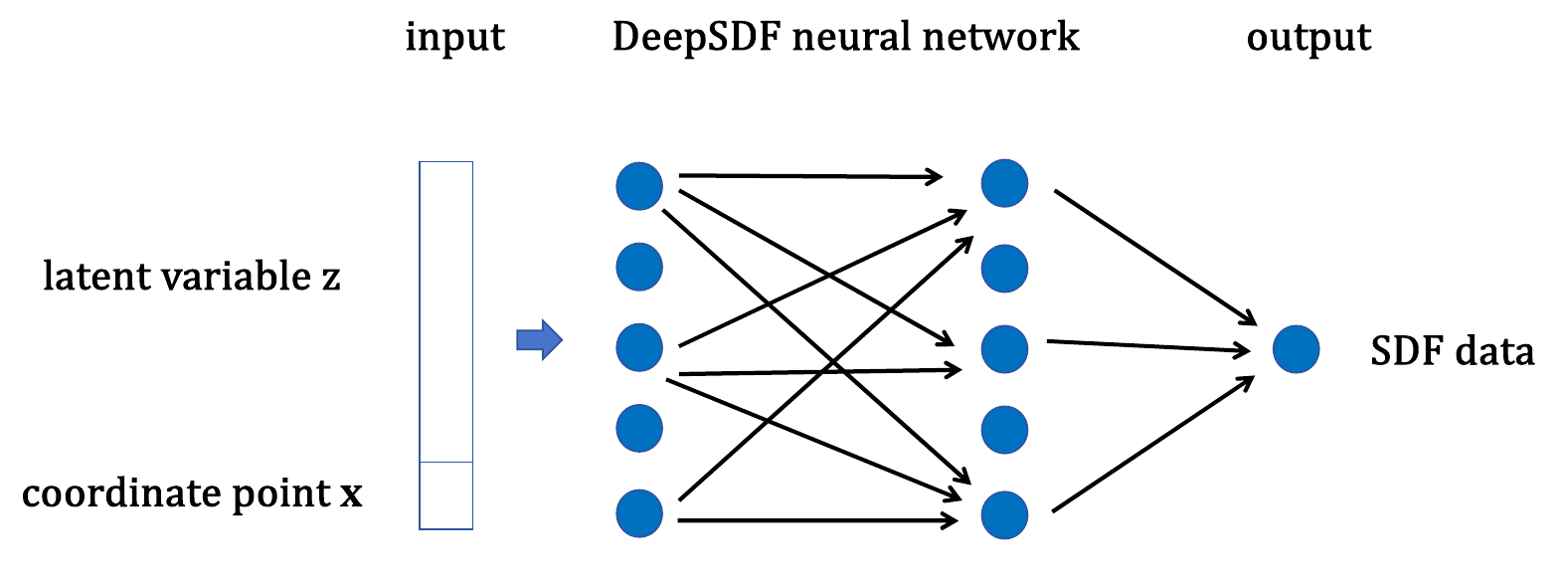}
    \caption{A schematic illustration of DeepSDF \cite{park2019deepsdf}.}	\label{fig:deepSDF}
\end{figure}

Numerically, the DeepSDF representation $f_\theta(z,x)$ is complete and continuous \cite{park2019deepsdf}. The ability of DeepSDF to produce high-quality latent shape manifold opens the door to developing effective reconstruction algorithms for ISP, and DeepSDF is the main tool behind our proposal. One notable advantage of the approach is that the inference can be performed from an arbitrary number of SDF samples. This enables DeepSDF to represent highly complex shapes without discretization errors while with a greatly reduced memory footprint, which differs markedly from the level set type methods that perform updates on a fixed mesh.

Now we can describe the detailed procedure for solving ISP using the latent surface representation. Upon using a trained DeepSDF to represent a surface in ISP, we seek a latent variable $z\in \mathbb{R}^Z$ by minimizing the following cost functional
\begin{eqnarray}\label{Jz}
    \mathcal{L}(z)=J(\Gamma_z)=\frac{1}{2LM} \sum_{l=1}^L \sum_{m=1}^M|u^\infty_{\Omega_z}(\hat{x}_m,d_l)-u^\infty_{\Omega^\star}(\hat{x}_m,d_l)|^2,
\end{eqnarray}
where the domain $\Omega_z$ is given by
\begin{eqnarray*}
    \Omega_z = \{ x\in \mathbb{R}^3| f_\theta(z,x)\le 0\}.
\end{eqnarray*}
The boundary $\Gamma_z$ of the domain $\Omega_z$ is determined implicitly by the equation $f_\theta(z,x)=0$. For a given label $z$, $\Gamma_z$ can be represented by $x=x(z)$ such that $f_\theta(z,x(z))=0$. Below we assume that the implicitly represented surface $\Gamma_z$ is regular \cite{Younes2019}, namely,
$$f_\theta(z,x)=0\Rightarrow \nabla_x f_\theta(z,x)\neq 0.$$
Note that the DeepSDF representation is a deep generative prior approach to ISP. Whenever the label $z$ is different to $\{z_S\}_{S\in \mathbb{D}}$, it will generate a new surface.

We denote the generator function by $G(z)=f_\theta(z,\cdot)$. The function $G$ is assumed to be differentiable in the latent variable $z$, and hence, we can use the standard back-propagation to compute the gradient of the loss involving $G$ in the latent vector $z$. The latter is crucial for employing gradient descent type updates for the training. In practice, each iteration of the method involves two steps that are performed alternatingly: a gradient descent update step and a projection step. The first step applies the shape derivative technique, performing the gradient descent update rule for the loss function \eqref{loss} and gives an approximation $\Gamma_t$, which however may not lie on the manifold. In the second projection step, we minimize the projection loss by the gradient descent update:
\begin{eqnarray*}
	P_G(\Gamma_t)=G(\arg \min_z\|\Gamma_t-G(z)\|),
\end{eqnarray*}
to enforce the manifold constraint. These two steps will be described in detail below.

\subsection{Shape derivative and gradient respect to the latent variable}
First, we recall the shape derivative using the velocity method \cite{sokolowski1992introduction}. Let 
$V:\mathbb{R}\times\mathbb{R}^3\mapsto\mathbb{R}^3$ be a given velocity field. The associated field $x(t,X)$ 
denotes the solution to a system
of ordinary differential equations
\begin{eqnarray}
    \frac{\rm d}{{\rm d} t}x(t,X)=V(t,x(t,X)), \label{velocity}
\end{eqnarray}
for $X\in \Gamma_z$, with the initial condition
$ x(0,X)=X.$
Let $T_t(X)=x(t,X)$. Then $T_t$ defines a transformation from $\Gamma$ to $\Gamma_t(V)=T_t(\Gamma)$. 
The shape derivative $dJ(\Gamma;V)$ of the functional $J(\Gamma)$ in the direction $V$ is defined by
\begin{eqnarray}
    dJ(\Gamma;V)=\lim_{t\rightarrow 0^+}\frac{J(\Gamma_t(V))-J(\Gamma)}{t}.\label{shaped}
\end{eqnarray}
Note that the shape derivative $dJ(\Gamma_z;V)$ only depends on the initial velocity $V(0,X)$ if the functional $J(\Gamma_z)$ is shape differentiable in the Hadamard sense \cite{Delfour2011}.
For more details about shape derivative, we refer to \cite{Delfour2011,sokolowski1992introduction}. 

Let $e_i\in \mathbb{R}^Z$ be the $i$th coordinate vector, with the $i$-th component being 1 and other components zeros. Then we can define a transformation $T(t)$ from the surface $\Gamma_z$ to $\Gamma_{z+te_i}$ as follows. The boundary $\Gamma_z$ is determined implicitly by $f_\theta(z,x)=0$ is $\Gamma_z=\{x\in \mathbb{R}^3|f_\theta(z,x)=0\}$ and $\Gamma_{z+te_i}$ likewise implicitly by $f_\theta(z+te_i,x)=0$ is $\Gamma_{z+te_i}=\{x\in \mathbb{R}^3|f_\theta(z+te_i,x)=0\}$ for $t\geq 0$.
Meanwhile, we may define a vector field $V:\mathbb{R}\times\mathbb{R}^3\mapsto\mathbb{R}^3$ by
\begin{eqnarray}
    V(t,x)=-\frac{\nabla_x f_\theta(z+t e_i,x)}{\|\nabla_x f_\theta(z+t e_i,x) \|^2} \frac{\partial f_\theta}{\partial z_i}(z+t e_i,x).\label{velo}
\end{eqnarray}
Then according to \eqref{velocity}, we derive that for any $X\in \Gamma_z,$
\begin{eqnarray*}
\begin{aligned}
    \frac{\rm d}{{\rm d}t}f_\theta(z+t e_i,x(t,X))=&\frac{\partial f_\theta}{\partial z_i}+\nabla_x f_\theta \cdot \frac{\rm d }{{\rm d} t}x(t,X)\\
    =&\frac{\partial f_\theta}{\partial z_i}-\nabla_x f_\theta \cdot \frac{\nabla_x f_\theta(z+t e_i,X)}{\|\nabla_x f_\theta(z+t e_i,X) \|^2} \frac{\partial f_\theta}{\partial z_i}=0,
\end{aligned}
\end{eqnarray*}
and
\begin{eqnarray*}
    f_\theta(z,x(0,X))=0,\quad \forall X\in \Gamma_z.
\end{eqnarray*}
Consequently, we have
\begin{eqnarray*}
    f_\theta(z+te_i,x(t,X))=0,\quad \forall t\geq 0,\forall X\in \Gamma_z.
\end{eqnarray*}
Let $T_t(X)=x(t,X)$. Then we have $T_t(X)\in \Gamma_{z+te_i}$ for $X\in\Gamma_z$. We assume that the map $T_t: \Gamma_z\rightarrow \Gamma_{z+te_i}$ is surjective for $0\le t\leq \varepsilon$ and $\varepsilon$ is a small constant, then $T_t(\Gamma_z) = \Gamma_{z+te_i}=\{x\in \mathbb{R}^3|f_\theta(z+te_i,x)=0\}$.
Meanwhile, if the functional $J(\Gamma)$ is shape differentiable at $\Gamma_z$ and $V\in C([0,\varepsilon);C^1(\overline{\Omega}_z,\mathbb{R}^3))$, then by \cite[Proposition 2.21]{sokolowski1992introduction}, the following relation holds
\begin{eqnarray}
   dJ(\Gamma_z;V)=\lim_{t\rightarrow 0^+}\frac{J(\Gamma_{z+te_i})-J(\Gamma_z)}{t}=\lim_{t\rightarrow 0^+}\frac{J(T_t(\Gamma_z))-J(\Gamma_z)}{t}.\label{shaped1}
\end{eqnarray}

The derivation of the shape derivative for ISP with the Dirichlet and Neumann boundary conditions was given by Kirsch \cite{Kirsch:1993} and Hettlich \cite{Hettlich:1995}.
Using the identity \eqref{shaped1} and adjoint technique, we have the following representation of the gradient $\nabla \mathcal{L}(z)$ of the loss $\mathcal{L}(z)$ in $z$.
\begin{theorem}\label{Lgradient}
Let $\Omega_z$ be a bounded domain of class $C^2$, and the loss $\mathcal{L}(z)$ be defined in \eqref{Jz}. If the gradient $\nabla_xf_\theta$ of $f_\theta$  does not vanish on  the surface $\Gamma_z$, then there holds
\begin{eqnarray}\label{gradient}
    \nabla \mathcal{L}(z)=-\frac{1}{L} \Re\left(\sum_{l=1}^L\int_{\Gamma_z}{\frac{\partial u_l}{\partial \nu}\frac{\partial w_l}{\partial \nu} \frac{\nabla_z f_\theta}{\|\nabla_x f_\theta\|} {\rm d}s}\right),
\end{eqnarray}
where $\Re$ denotes taking the real part of a complex number, $\nu$ is the unit outward normal vector to the boundary $\Gamma_z$, $u_l$ is the total field generated by the incident plane wave with the incident direction $d_l$ of problem \eqref{hel} and $w_l$ is the solution of the respective adjoint problem defined in \eqref{adjoint} below.
\end{theorem}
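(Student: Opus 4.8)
The plan is to differentiate $\mathcal{L}$ coordinate-wise and assemble $\nabla\mathcal{L}(z)$ from the partials $\partial_{z_i}\mathcal{L}(z)$, $i=1,\dots,Z$. Fix $i$ and let $V^{(i)}$ be the velocity field \eqref{velo} built from the coordinate direction $e_i$. Since $\nabla_x f_\theta$ does not vanish on $\Gamma_z$, by continuity it stays nonzero in a tubular neighbourhood of $\Gamma_z$ for $t$ small, so $V^{(i)}$ is well defined and, as $f_\theta$ is smooth, $V^{(i)}\in C([0,\varepsilon);C^1(\overline{\Omega}_z,\mathbb{R}^3))$; the flow $T_t$ of \eqref{velocity} then carries $\Gamma_z$ onto $\Gamma_{z+te_i}$, as already verified in Section~\ref{method}. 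Because $\Omega_z$ is of class $C^2$, $J$ is (Hadamard) shape differentiable at $\Gamma_z$, so identity \eqref{shaped1} identifies the right-hand $z_i$-derivative of $\mathcal{L}$ with $dJ(\Gamma_z;V^{(i)})$; running the same argument with $-e_i$ (velocity $-V^{(i)}$, shape derivative $-dJ(\Gamma_z;V^{(i)})$ by linearity of the Hadamard form in the normal speed) shows the left derivative coincides, so $\mathcal{L}$ is differentiable at $z$ with $\partial_{z_i}\mathcal{L}(z)=dJ(\Gamma_z;V^{(i)})$.

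Next I would invoke the classical shape derivative of the far-field misfit \eqref{loss} under the sound-soft boundary condition, established by Kirsch \cite{Kirsch:1993} and Hettlich \cite{Hettlich:1995}. Differentiating the Dirichlet condition $u_l=0$ on $\Gamma$ along the flow shows that the shape derivative $u_l'$ of the total field is the radiating solution of the exterior Helmholtz equation with boundary data $u_l'=-(V\cdot\nu)\,\partial u_l/\partial\nu$ on $\Gamma_z$, whence $\Omega\mapsto u^\infty_\Omega(\hat x_m,d_l)$ is differentiable with derivative equal to the far-field pattern of $u_l'$ at $\hat x_m$. Substituting this into \eqref{loss}, introducing for each incident direction $d_l$ the adjoint state $w_l$ of \eqref{adjoint} — the radiating Helmholtz solution whose data superpose the (conjugated) residuals $u^\infty_{\Omega_z}(\hat x_m,d_l)-u^\infty_{\Omega^\star}(\hat x_m,d_l)$ over the measurement directions $\hat x_m$ — and applying Green's second identity together with the far-field representation \eqref{ffield} turns the pairing of $u_l'$ with the residuals into a boundary integral over $\Gamma_z$:
\[
  dJ(\Gamma_z;V)=\frac{1}{L}\,\Re\!\left(\sum_{l=1}^{L}\int_{\Gamma_z}\frac{\partial u_l}{\partial\nu}\,\frac{\partial w_l}{\partial\nu}\,(V\cdot\nu)\,{\rm d}s\right),
\]
where the sign convention and the absorption of the factor $1/M$ are fixed by the normalization of $w_l$ in \eqref{adjoint}.

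Finally I would compute $V^{(i)}\cdot\nu$ on $\Gamma_z$. As $f_\theta(z,\cdot)\le 0$ inside $\Omega_z$, the outward unit normal is $\nu=\nabla_x f_\theta/\|\nabla_x f_\theta\|$, so from \eqref{velo},
\[
  V^{(i)}\cdot\nu
  =-\frac{\nabla_x f_\theta\cdot\nabla_x f_\theta}{\|\nabla_x f_\theta\|^{3}}\,\frac{\partial f_\theta}{\partial z_i}
  =-\frac{1}{\|\nabla_x f_\theta\|}\,\frac{\partial f_\theta}{\partial z_i}.
\]
Inserting this into the displayed shape-derivative formula gives $\partial_{z_i}\mathcal{L}(z)=-\frac{1}{L}\Re\bigl(\sum_{l}\int_{\Gamma_z}\frac{\partial u_l}{\partial\nu}\frac{\partial w_l}{\partial\nu}\,\|\nabla_x f_\theta\|^{-1}\,\partial_{z_i}f_\theta\,{\rm d}s\bigr)$; stacking the $Z$ components and recognizing $(\partial_{z_1}f_\theta,\dots,\partial_{z_Z}f_\theta)=\nabla_z f_\theta$ yields exactly \eqref{gradient}.

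The main obstacle is the second step: rigorously establishing shape differentiability of $u^\infty_\Omega$ and the boundary-value characterization of $u_l'$ at the $C^2$ regularity of $\Gamma_z$, and then justifying the adjoint manipulation — applying Green's second identity in the unbounded exterior domain under the Sommerfeld condition and matching it with \eqref{ffield} so that precisely the right residual data and normalization appear in \eqref{adjoint}. These are exactly the points covered by \cite{Kirsch:1993,Hettlich:1995} within the general shape-calculus framework of \cite{Delfour2011,sokolowski1992introduction}; the remaining bookkeeping — interchanging the finite sums with the limit, and confirming that the Hadamard structure reduces $dJ(\Gamma_z;V)$ to its dependence on $V\cdot\nu$ — is routine once $V^{(i)}\in C([0,\varepsilon);C^1(\overline{\Omega}_z,\mathbb{R}^3))$ and the surjectivity of $T_t$ are in hand, both of which follow from the non-vanishing of $\nabla_x f_\theta$ and the smoothness of $f_\theta$.
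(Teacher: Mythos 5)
Your proposal is correct and follows essentially the same route as the paper: identify $\partial_{z_i}\mathcal{L}(z)$ with the shape derivative $dJ(\Gamma_z;V)$ for the velocity field \eqref{velo} via \eqref{shaped1}, invoke the adjoint representation \eqref{shape} of the far-field misfit (the paper simply cites \cite[Theorem 3.2]{audibert2022accelerated} where you sketch its Kirsch--Hettlich derivation), and substitute $V(0)\cdot\nu=-\|\nabla_x f_\theta\|^{-1}\partial f_\theta/\partial z_i$. The only difference is cosmetic: you additionally check the left derivative and outline the Green's-identity argument that the paper delegates to the cited reference.
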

\begin{proof}
By the definition of $\mathcal{L}(z)$, we have
\begin{eqnarray*}
    \frac{\partial\mathcal{L}(z)}{\partial z_i}=\lim_{t\rightarrow 0^+}\frac{J(\Gamma_{z+te_i})-J(\Gamma_z)}{t}.
\end{eqnarray*}
By the definition \eqref{shaped} of the shape derivative,  we can derive from \eqref{shaped1} that
\begin{eqnarray}\label{gradient1}
   \lim_{t\rightarrow 0^+}\frac{J(\Gamma_{z+te_i})-J(\Gamma_z)}{t}=dJ(\Gamma_z;V),
\end{eqnarray}
with $V$ defined by \eqref{velo}.
Since $\Omega_z$ is a bounded domain of class $C^2$, it follows from \cite[Theorem 3.2]{audibert2022accelerated} that  the shape derivative of $J(\Gamma)$ at $\Gamma_z$ is given by
\begin{eqnarray}\label{shape}
    dJ(\Gamma_z;V)=\frac{1}{L} \Re\left(\sum_{l=1}^L\int_{\Gamma_z}{\frac{\partial u_l}{\partial \nu}\frac{\partial w_l}{\partial \nu}(\nu\cdot V(0)){\rm d}s}\right), 
\end{eqnarray}
where $u_l$ is the total field of problem \eqref{hel} (with the incident direction $d = d_l$) and $w_l := w_l^i + w_l^s$ is the solution to the following adjoint problem
\begin{eqnarray}\label{adjoint}
\left\{	\begin{aligned}
            \Delta w_l + k^2 w_l  &=0,  \quad \mbox{in }\mathbb{R}^3 \backslash \Omega_z,   \\
            w_l &= 0, \quad\mbox{on } \Gamma_z, \\
            \frac{\partial w_l^s}{\partial r}-ikw_l^s & =o(r^{-1}),\quad \mbox{as }r\to \infty,
    \end{aligned}
\right.\end{eqnarray}
with the incident wave
\begin{eqnarray*}
    w_l^i(y)=\frac{1}{4\pi M}\sum_{m=1}^{M}\overline{ (u^\infty_{\Omega_z}(\hat{x}_m,d_l)-u^\infty_{\Omega^*}(\hat{x}_m,d_l))}e^{-ik\hat{x}_m \cdot y},
\end{eqnarray*}
where the overbar refers to complex conjugate.
Plugging the expression
\begin{eqnarray*}
    V(0)\cdot \nu=-\frac{\nabla_x f_\theta}{\|\nabla_x f_\theta \|^2} \frac{\partial f_\theta}{\partial z_i}\cdot \frac{\nabla_x f_\theta}{\|\nabla_x f_\theta\|}=-\frac{1}{\|\nabla_x f_\theta\|}\frac{\partial f_\theta}{\partial z_i}
\end{eqnarray*}
into \eqref{shape} yields the desired result.
\end{proof}

Note that the latent variable $z$ is the input of the DNN $f_\theta$, the gradients $\nabla_x f_\theta$ and $\nabla_z f_\theta$ can both be computed via automatic differentiation with backpropagation \cite{Goodfellow2016}, e.g.,  \texttt{torch.autograd} in PyTorch. 

Once we have the gradient $\nabla \mathcal{L}(z)$, we can use the ADAM algorithm \cite{KingmaBa:2015} to update the latent variable $z$. Specifically, at each iteration, by randomly choosing partial components of $\nabla \mathcal{L}(z_n)$, we get a stochastic gradient $g_n$ such that $\mathbb{E} [g_n]=\nabla \mathcal{L}(z_n)$. This can be implemented by, e.g., \texttt{torch.optim.Adam} in PyTorch. Following standard practice, we take $0 < \beta_2 \le 1, 0 \le \beta_1 < \beta_2,$ and a nonnegative sequence $(\alpha_n)_{n\in \mathbb{N}}$, and define three vectors $m_n,v_n,z_n\in \mathbb{R}^Z$ iteratively. More precisely, given the initial guess $z_0\in \mathbb{R}^Z$, $m_0 = 0$, and $v_0 = 0$, we define for all indices $n\in \mathbb{N}$,
\begin{align}\label{adam}
\left\{\begin{aligned}
    m_{n,i}=&\beta_1 m_{n-1,i}+(1-\beta_1)g_{n-1,i},\\
    v_{n,i}=&\beta_2 v_{n-1,i}+(1-\beta_2)(g_{n-1,i})^2,\\
    z_{n,i}=&z_{n-1,i}-\alpha_n \frac{m_{n,i}}{\sqrt{v_{n,i}}},
\end{aligned}\right.
\end{align}
where $x_j$ denotes $j$-th coordinate of a vector $x$, and $\alpha_n$ is the learning rate at step $n$, $\beta_{1}>0$ is a momentum parameter, and $\beta_2$ controls the decay rate of the per-coordinate exponential moving average of the squared gradient. The procedure is given in Algorithm \ref{algorithm}.

\begin{algorithm}[t]
\caption{Iterative recovery with latent represent surfaces}
\label{algorithm}
\begin{algorithmic}[1]
    \State {\bf Input:} Far field pattern, wave number $k$, directions $\{d_l\}_{l=1}^L$, $\{\hat x_m\}_{m=1}^M$.
    \State  {\bf Initialization:} initial label $z_0$, step-size sequence:$\{\alpha_n\}_{n=0}^{N}$, $m_{0}=0$ and $v_{0}=0$.
    \For{$n=0,1,2,...N$}
    \State Generate the surface $\Gamma$ for $z_n$ (with marching cubes algorithm).
    \State Compute $u_l^n$ on $\Gamma$ by solving problem \eqref{hel}. 
    \State Compute  $w_l^n$ on $\Gamma$ by solving problem \eqref{adjoint}.
    \State Compute the gradient $\nabla \mathcal{L}(z_{n})$ with \eqref{gradient}.
    \State Generate an unbiased sample $g_n$ (i.e., $\mathbb{E} [g_n]=\nabla \mathcal{L}(z_n)$).
    \State $m_{n+1,i}=\beta_1 m_{n,i}+(1-\beta_1)g_{n-1,i}$.
    \State $v_{n+1,i}=\beta_2 v_{n,i}+(1-\beta_2)(g_{n-1,i})^2$.
    \State $z_{n+1,i}=z_{n,i}-\alpha_{n+1} \frac{m_{n+1,i}}{\sqrt{v_{n+1,i}}}$.
    \EndFor\\
    \Return Surface with latent $z_{N+1}$.
\end{algorithmic}
\end{algorithm}

\section{Convergence of the optimization algorithm}\label{analysis}
Now we discuss the convergence of the scheme \eqref{adam}, which provides preliminary mathematical guarantees for the imaging algorithm. First, we define the second-order shape derivative. Let $V_1,V_2:\mathbb{R}\times\mathbb{R}^3\mapsto\mathbb{R}^3$ be two differentiable velocity fields. Let the objective $J(\Gamma)$ be shape differentiable in the sense of Hadamard \cite{Delfour2011}. Then we define the second derivative by
\begin{eqnarray*}
    d^2J(\Gamma;V_1;V_2)=\lim_{t\rightarrow 0^+}\frac{dJ(\Gamma_t(V_1);V_2(t))-dJ(\Gamma;V_2(0))}{t}.
\end{eqnarray*}
It follows from   \cite[Theorem 6.2, Chapter 9]{Delfour2011} that
\begin{eqnarray}
    d^2J(\Gamma;V_1;V_2)=d^2J(\Gamma;V_1(0);V_2(0))+dJ(\Gamma;V_1'(0)),\label{J2OVV}
\end{eqnarray}
with
\begin{align*}
    d^2J(\Gamma;V_1(0);V_2(0))&:=\lim_{t\rightarrow 0^+}\frac{dJ(T_t(\Gamma,V_1);V_2(0))-dJ(\Gamma;V_2(0))}{t},\\
    dJ(\Gamma;V(0))&:= \lim_{t\rightarrow 0^+}\frac{J((T_t(\Gamma,V))-J(\Gamma)}{t},\\
    T_t(\Gamma,V)&=\{x+tV(0,x)|x\in\Gamma\},
\end{align*}
and
\begin{eqnarray*}   
    V'(0)=\lim_{t\rightarrow0+}\frac{V(t,x)-V(0,x)}{t}.
\end{eqnarray*}
A detailed discussion of second-order shape derivatives can be found in the monographs \cite{Delfour2011,simon1989second} and \cite[appendix]{hettlich1999second}. First, we provide useful a priori bounds on the shape derivatives.
\begin{theorem}\label{hessian}
Let the boundary $\Gamma$ be of  class $C^5$. Then there is $C$ independent of $V$ such that
\begin{align*}
    |dJ(\Gamma;V(0))|&\le C\|V(0)\|_{C(\Gamma;\mathbb{R}^3)}, \quad\forall  V\in C([0,\varepsilon);C(\overline{\Omega},\mathbb{R}^3)),\\ 
    |d^2J(\Gamma;V_1(0);V_2(0))|&\le C\big\|V_{1}(0)\|_{C^{3}(\Gamma,\mathbb{R}^3)}\|V_{2}(0)\|_{C^{3}(\Gamma,\mathbb{R}^3)},\,\quad \forall V_1,V_2\in C([0,\varepsilon);C^3(\overline{\Omega},\mathbb{R}^3)),
\end{align*}
where $\varepsilon>0$ is a small constant, $\|f\|_{C^m(\Gamma)}\triangleq \sum_{i=0}^m\|f^{(i)}\|_{C(\Gamma)}$ and $\|f\|_{C(\Gamma)}\triangleq \max_{x\in \Gamma}|f(x)|$ for any $f(x)\in C^m(\overline{\Omega}, \mathbb{R}^3)$.
\end{theorem}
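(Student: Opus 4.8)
The plan is to establish the two bounds by tracing through the explicit boundary-integral representations of $dJ$ and $d^2J$, controlling each factor by the regularity of the boundary and the stated smoothness of the velocity fields. The key point is that all the data-fitting in $J$ is mediated by the far-field pattern $u^\infty_\Omega$, which by \eqref{ffield} depends on the Cauchy data $(u^s,\partial u^s/\partial\nu)$ on $\Gamma$; the first step is therefore to record that for a $C^2$ (indeed $C^5$) boundary the forward solution $u_l$ satisfies elliptic estimates $\|u_l\|_{C^{k+1}(\overline{\Omega'})}\le C$ for any bounded $\Omega'\supset\Omega$ and $k$ up to the available boundary regularity, with $C$ depending only on $\Gamma$, $k$, and the incident data; the same holds for the adjoint field $w_l$ since $w_l^i$ is a finite sum of entire plane waves weighted by the (bounded) far-field mismatch. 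Combined with the representation \eqref{shape}, namely $dJ(\Gamma;V(0))=\tfrac1L\Re\sum_l\int_{\Gamma}\frac{\partial u_l}{\partial\nu}\frac{\partial w_l}{\partial\nu}(\nu\cdot V(0))\,{\rm d}s$, the first inequality is then immediate: pull $\|V(0)\|_{C(\Gamma;\mathbb R^3)}$ out of the integral and bound $\int_\Gamma|\partial_\nu u_l||\partial_\nu w_l|\,{\rm d}s\le |\Gamma|\,\|u_l\|_{C^1(\overline\Omega)}\|w_l\|_{C^1(\overline\Omega)}\le C$.

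For the second-order bound I would use the decomposition \eqref{J2OVV}, so it suffices to bound $d^2J(\Gamma;V_1(0);V_2(0))$ — the term $dJ(\Gamma;V_1'(0))$ is not present here because the statement is about $d^2J(\Gamma;V_1(0);V_2(0))$ directly, with the two velocities entering only through their time-zero values. The structured formula for the second shape derivative of the Dirichlet far-field functional (see \cite{hettlich1999second} and \cite[Ch.~9]{Delfour2011}) expresses $d^2J(\Gamma;V_1(0);V_2(0))$ as boundary integrals whose integrands are products of: (i) normal components $\nu\cdot V_1(0)$, $\nu\cdot V_2(0)$ (or tangential derivatives thereof), (ii) traces on $\Gamma$ of the Cauchy data of $u_l$, $w_l$, and (iii) traces of the material/shape derivatives $u_l'$, $w_l'$ of the states with respect to the perturbations $V_1,V_2$. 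The plan is to bound each type of factor: factors of type (i) by $\|V_j(0)\|_{C^1(\Gamma;\mathbb R^3)}$ (at most one tangential derivative appears, and $C^1\le C^3$); factors of type (ii) by $\|u_l\|_{C^2(\overline\Omega)}+\|w_l\|_{C^2(\overline\Omega)}\le C$, using the $C^5$ boundary regularity through standard Schauder/elliptic boundary estimates for the exterior Helmholtz problem via the Burton–Miller equation \eqref{bie}; and factors of type (iii) by a regularity estimate for the shape-derivative PDE. That last estimate is the crux: $u_l'$ solves an exterior Helmholtz problem with Dirichlet data $-(\nu\cdot V_j(0))\,\partial_\nu u_l$ on $\Gamma$, so by the mapping properties of the exterior Dirichlet solution operator $\|u_l'\|_{C^1(\overline{\Omega'})}\le C\|(\nu\cdot V_j(0))\partial_\nu u_l\|_{C^2(\Gamma)}\le C\|V_j(0)\|_{C^2(\Gamma;\mathbb R^3)}\|u_l\|_{C^3(\overline\Omega)}\le C\|V_j(0)\|_{C^3(\Gamma;\mathbb R^3)}$, where the loss of derivatives from $\Gamma$ to the Dirichlet datum to $u_l'$ is exactly what forces the $C^5$ assumption on $\Gamma$ and the $C^3$ norm on the velocities. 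Collecting the three bounds and summing the finitely many boundary-integral terms gives $|d^2J(\Gamma;V_1(0);V_2(0))|\le C\|V_1(0)\|_{C^3(\Gamma;\mathbb R^3)}\|V_2(0)\|_{C^3(\Gamma;\mathbb R^3)}$ with $C$ depending only on $\Gamma$, $k$, and the measurement geometry, hence independent of $V_1,V_2$.

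The main obstacle I anticipate is making the regularity bookkeeping precise and uniform: one must verify that the exterior Dirichlet solution operator for the Helmholtz equation maps $C^{m+\alpha}(\Gamma)$ Dirichlet data to $C^{m+\alpha}(\overline{\Omega'})$ on compact exterior sets with a constant depending only on $\Gamma$ and $k$ (away from any interior Dirichlet eigenvalue issue, which the Burton–Miller formulation \eqref{bie} sidesteps), and then track how many derivatives are consumed at each stage — two from $\Gamma$'s curvature entering the boundary integral weights and the normal $\nu$, one more in the tangential derivative appearing in the second-derivative formula, and the derivatives needed to make $\partial_\nu u_l$, $\partial_\nu w_l$ (and $\partial_\nu u_l'$) well defined and bounded — so that the total budget closes at $C^5(\Gamma)$. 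A secondary technical point is confirming that $\partial_\nu w_l$ enjoys the same $C^2(\Gamma)$ control as $\partial_\nu u_l$; this follows because $w_l^i$ is a finite linear combination of plane waves $e^{-ik\hat x_m\cdot y}$ with coefficients bounded by the far-field mismatch $|u^\infty_{\Omega_z}(\hat x_m,d_l)-u^\infty_{\Omega^\star}(\hat x_m,d_l)|$, which is itself bounded in terms of $\|u_l\|_{C^1(\overline\Omega)}$ and the true obstacle's far field, so the whole chain of constants depends only on the fixed data of the problem and on $\Gamma$.
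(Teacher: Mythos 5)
Your plan is correct in outline and, for the first bound, is exactly the paper's argument: start from the representation \eqref{shape}, pull $\|V(0)\cdot\nu\|_{C(\Gamma)}$ out of the boundary integral, and bound $\partial_\nu u_l$, $\partial_\nu w_l$ by a priori regularity of the state and adjoint (the paper does this with $L^2(\Gamma)$ trace/DtN bounds from Lemmas \ref{regular}--\ref{DtN} rather than pointwise $C^1$ bounds, but the content is the same). For the second bound the two routes diverge in organization and toolkit, though the crux is identical. The paper does not differentiate the adjoint representation of $dJ$; instead it writes $J$ as a quadratic functional of the far-field operator $F$ and uses the chain-rule identity $d^2J=\sum_{l,m} dF_{l,m}(\Gamma;V_1(0))\,dF_{l,m}(\Gamma;V_2(0))+(F_{l,m}(\Gamma)-u^*_{l,m})\,d^2F_{l,m}(\Gamma;V_1(0);V_2(0))$, reducing everything to Lemma \ref{derivativeF}, i.e.\ to bounds on $dF$ and $d^2F$ obtained from Hettlich's formulas \eqref{firstorder0} and \eqref{secondorder}--\eqref{eqn:u''-boundary}; the estimates are then run in fractional Sobolev spaces ($H^{1/2}$, $H^{5/2}$, the DtN bound of Lemma \ref{DtN}, the product rule Lemma \ref{fg}, and the embedding Lemma \ref{fraction}), with the $C^5$ boundary entering through $u\in H^5(\Omega^+)$, $\partial_\nu u\in H^{7/2}(\Gamma)$. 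You instead propose a pointwise Schauder/H\"older bookkeeping and a direct boundary-integral form of $d^2J$ whose integrands involve $u_l'$, $w_l'$; the derivative-loss mechanism you identify (Dirichlet data $-(\nu\cdot V_j(0))\partial_\nu u_l$ for the linearized problem, costing the $C^3$ norms of the velocities and the $C^5$ boundary) is exactly the mechanism in the paper's bound of ${\rm I}_1$, ${\rm I}_2$, so the counting closes the same way. What your route buys is avoiding fractional Sobolev product/embedding lemmas; what it costs is that the two ingredients you invoke but do not supply are precisely where the paper's work lies: (i) Hettlich's theorem gives $d^2F$, not an adjoint-state formula for $d^2J$ containing $w_l'$, so you would either have to derive that formula (including the terms from differentiating the surface measure, the normal, and the domain-dependent adjoint incident field $w_l^i$, which carries the far-field mismatch) or, more economically, pass through the chain rule as the paper does; and (ii) the uniform $C^{m,\alpha}$ mapping property of the exterior Dirichlet solution operator (your "main obstacle") must be justified, playing the role of Lemmas \ref{regular}--\ref{DtN}. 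Neither point is a wrong step, but both need to be made explicit for the proof to be complete.
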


The proof of the theorem requires several preliminary results on fractional order Sobolev spaces in the appendix, and a classical regularity result for elliptic PDEs \cite[Theorem 2.6.7]{nedelec2001acoustic}. 
\begin{lemma}\label{regular}
If the boundary $\Gamma$ is of  class  $C^{m}$, $u^i \in H^{m-1/2}(\Gamma),\, m\in \mathbb{N}$, then the solution $u$ of problem \eqref{hel} satisfies $u\in H^{m}(\Omega^+)$, and there exists $C=C(k,\Omega)$ such that
\begin{eqnarray*}
    \|u\|_{H^{m}(\Omega^+)}\le C \|u^i\|_{H^{m-1/2}(\Gamma)},
\end{eqnarray*}
with $\Omega^+:=\mathbb{R}^3 \backslash \Omega$,
$H(\Omega^+)=\{u, \|u\|_{H(\Omega^+)}:=\int_{\Omega^+} \frac{u^2}{r^2+1}+\frac{|\nabla u|^2}{r^2+1}{\rm d}V+\int_{\Omega^+} |\frac{\partial u}{\partial r}-iku|^2 {\rm d}V<\infty\}$, and
$H^m(\Omega^+)=\{u, \|u\|_{H^m(\Omega^+)}:=\sum_{|\alpha|\le m}\|D^\alpha u\|_{H(\Omega^+)}<\infty\}$, with $ r=\|x\|_2$.
\end{lemma}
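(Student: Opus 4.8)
The plan is to deduce Lemma~\ref{regular} from the well-posedness of the exterior Dirichlet problem in the weighted space together with a standard elliptic bootstrap, all of which is in essence the content of \cite[Theorem~2.6.7]{nedelec2001acoustic}; below I sketch the ingredients I would assemble. First I would reduce the statement to a purely boundary-value one: by superposition, the scattered field $u^s=u-u^i$ is the unique radiating solution of $\Delta u^s+k^2u^s=0$ in $\Omega^+$ with Dirichlet data $u^s=-u^i$ on $\Gamma$, and since $u^i\in H^{m-1/2}(\Gamma)$ by hypothesis, it suffices to prove $\|u^s\|_{H^m(\Omega^+)}\le C\|g\|_{H^{m-1/2}(\Gamma)}$ for a general radiating solution with Dirichlet datum $g$ on $\Gamma$, and then specialize $g=-u^i|_\Gamma$. (This is in fact the only reading consistent with the weighted norm, since the plane wave \eqref{incident} itself does not belong to $H(\Omega^+)$.) For the base case $m=1$, the variational formulation of the exterior problem on an artificial ball equipped with a Dirichlet-to-Neumann boundary operator, combined with the Fredholm alternative and the uniqueness of \eqref{hel}, yields $u^s\in H(\Omega^+)$ together with $\|u^s\|_{H(\Omega^+)}\le C\|g\|_{H^{1/2}(\Gamma)}$.

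Next I would upgrade the regularity near $\Gamma$ by a localize-and-flatten argument. Away from $\Gamma$ the field $u^s$ solves a constant-coefficient elliptic equation and is real-analytic, so there is nothing to do in the interior. Near $\Gamma$, a partition of unity plus a $C^m$ change of coordinates flattening the boundary turns the equation into a second-order elliptic equation $\mathcal{A}\tilde u=\tilde f$ with principal coefficients of class $C^{m-1}$ and vanishing right-hand side up to lower-order terms, with $\tilde u=\tilde g$ on a flat piece and $\tilde g\in H^{m-1/2}$. Applying the shift theorem for the Dirichlet problem — if $\tilde u\in H^s$, $\tilde f\in H^{s-1}$, $\tilde g\in H^{s+1/2}$ and the coefficients are sufficiently smooth, then $\tilde u\in H^{s+1}$ with $\|\tilde u\|_{H^{s+1}}\le C(\|\tilde f\|_{H^{s-1}}+\|\tilde g\|_{H^{s+1/2}}+\|\tilde u\|_{L^2})$ — and iterating $s=1,\dots,m-1$ promotes $u^s$ from $H^1$ to $H^m$ on a bounded collar of $\Gamma$, the right-hand side at each step being controlled by the previous level and ultimately by $\|g\|_{H^{m-1/2}(\Gamma)}$.

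Finally I would treat the behaviour at infinity, which is precisely what the weighted space $H(\Omega^+)$ encodes. Writing $u^s$ through its Green (or single-layer) representation over $\Gamma$, one obtains $|D^\alpha u^s(x)|\le C_\alpha|x|^{-1}$ for all $|\alpha|\le m$ and $|\partial_r u^s-iku^s|\le C|x|^{-2}$ for $|x|$ large, with constants bounded by the collar norm of $u^s$, hence by $\|g\|_{H^{m-1/2}(\Gamma)}$; these pointwise decay bounds make the weighted integrals $\int\frac{|D^\alpha u^s|^2}{r^2+1}\,{\rm d}V$ and $\int|\partial_r u^s-iku^s|^2\,{\rm d}V$ over $\{|x|>R\}$ finite with the correct bound. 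Assembling the collar estimate with the far-field estimate and using $\|g\|_{H^{1/2}}\le C\|g\|_{H^{m-1/2}}$ then gives $\|u^s\|_{H^m(\Omega^+)}\le C(k,\Omega)\|u^i\|_{H^{m-1/2}(\Gamma)}$, which is the claim. The main obstacle is bookkeeping at the borderline: a $C^m$ boundary is exactly enough to reach $H^m$, so the shift theorem must be invoked with sharp coefficient-regularity hypotheses (using, e.g., the fractional Sobolev multiplier estimates recorded in the appendix), and the far-field decay has to be matched to Nédélec's weighted norm rather than to ordinary $H^m_{\rm loc}$; for this reason, in the paper I would simply verify the hypotheses of \cite[Theorem~2.6.7]{nedelec2001acoustic} and cite it, noting that $u^i\in H^{m-1/2}(\Gamma)$ supplies admissible Dirichlet data.
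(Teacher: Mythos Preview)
Your proposal is correct and aligned with the paper: Lemma~\ref{regular} is stated there without proof, simply as the classical regularity result \cite[Theorem~2.6.7]{nedelec2001acoustic}, which is exactly the reference you identify and ultimately suggest citing. Your sketch of the underlying ingredients (reduction to the scattered field, the $m=1$ variational base case, the local shift theorem near $\Gamma$, and the weighted decay at infinity) is accurate but goes beyond what the paper records.
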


\begin{lemma}\label{DtN}
Let $u$ be the solution of problem \eqref{hel} and the boundary $\Gamma$ be of class $C^m$, $m\in \mathbb{N}$ and $m\geq 1$. Then $\gamma u \in H^{m-1/2}(\Gamma), \partial_\nu u \in H^{m-3/2}(\Gamma)$, and
\begin{eqnarray*}
    \|\partial_{\nu}u\|_{H^{m-3/2}(\Gamma)}\le C\|\gamma u\|_{H^{m-1/2}(\Gamma)}.
\end{eqnarray*}
\end{lemma}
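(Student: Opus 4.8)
The plan is to obtain the bound by feeding the elliptic regularity of Lemma~\ref{regular} into a trace theorem for the conormal derivative, after localizing to a bounded neighbourhood of $\Gamma$; in effect the assertion is the boundedness of the exterior Dirichlet-to-Neumann map $g:=\gamma u\mapsto\partial_\nu u$. Since every quantity in the statement lives on $\Gamma$, fix a large ball $B_R\supset\overline{\Omega}$ and work on $U:=\Omega^+\cap B_R$, on which the weighted norm $\|\cdot\|_{H^m(\Omega^+)}$ of Lemma~\ref{regular} is equivalent to the ordinary $\|\cdot\|_{H^m(U)}$. The exterior Dirichlet problem for the Helmholtz equation is unconditionally well posed, so combining its energy-level stability estimate $\|u\|_{H^1(U)}\le C\|g\|_{H^{1/2}(\Gamma)}$ with the regularity of Lemma~\ref{regular} (a bootstrap, since $m-\tfrac12\ge\tfrac12$) yields $u\in H^m(U)$ with $\|u\|_{H^m(U)}\le C\|g\|_{H^{m-1/2}(\Gamma)}$, every residual lower-order term being absorbed through the global uniqueness of the radiating solution. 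In particular $\gamma u\in H^{m-1/2}(\Gamma)$ by the standard trace theorem.

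It then remains to bound the Neumann trace by $\|u\|_{H^m(U)}$. I would first dispose of the base case $m=1$: for $u\in H^1(U)$ with $\Delta u=-k^2u\in L^2(U)$, the generalized Green identity $\langle\partial_\nu u,\gamma\varphi\rangle_{\Gamma}=\int_U(\nabla u\cdot\nabla\overline{\varphi}-k^2u\overline{\varphi})\,{\rm d}V$, valid for $\varphi\in H^1(U)$ supported away from $\partial B_R$, exhibits $\partial_\nu u$ as an element of $H^{-1/2}(\Gamma)$ with $\|\partial_\nu u\|_{H^{-1/2}(\Gamma)}\le C\|u\|_{H^1(U)}$. For general $m$ one either iterates this identity using the extra regularity from Lemma~\ref{regular}, or argues directly: $\nabla u\in H^{m-1}(U;\mathbb{C}^3)$, so its trace lies in $H^{m-3/2}(\Gamma)$ with $\|\gamma(\nabla u)\|_{H^{m-3/2}(\Gamma)}\le C\|u\|_{H^m(U)}$; and since $\Gamma$ is of class $C^m$ the unit normal satisfies $\nu\in C^{m-1}(\Gamma;\mathbb{R}^3)$, so multiplication by $\nu$ is bounded on $H^s(\Gamma)$ for $|s|\le m-1$, whence $\partial_\nu u=\nu\cdot\gamma(\nabla u)\in H^{m-3/2}(\Gamma)$ with $\|\partial_\nu u\|_{H^{m-3/2}(\Gamma)}\le C\|u\|_{H^m(U)}$. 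Chaining this with the regularity estimate of the first paragraph gives the lemma.

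The points that I expect to need the most care are (i) sharpening Lemma~\ref{regular} so that the right-hand side is precisely $\|\gamma u\|_{H^{m-1/2}(\Gamma)}$ with no leftover lower-order term --- the clean elliptic shift estimate for the exterior Dirichlet-to-Neumann map --- which forces one to couple interior regularity near $\Gamma$ with the global stability of the radiating exterior problem; and (ii) the multiplication estimate by $\nu$ on $H^{m-3/2}(\Gamma)$ when the order is negative or fractional: for $m=1$ one has $m-\tfrac32=-\tfrac12$ while $\Gamma\in C^1$ only gives $\nu\in C^0$, so one in fact needs $\nu$ Lipschitz, i.e. $\Gamma\in C^{1,1}$, at that endpoint. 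This is immaterial for the use of the lemma in Theorem~\ref{hessian}, where $\Gamma$ is of class $C^5$, but it ought to be noted. A self-contained alternative that avoids (ii) altogether is the single-layer ansatz $u=\mathcal{S}\phi$ on $\Omega^+$, giving $\gamma u=\mathcal{S}\phi$ and $\partial_\nu u=(-\tfrac12 I+\mathcal{K}')\phi$, and then invoking the isomorphism $\mathcal{S}:H^{m-3/2}(\Gamma)\to H^{m-1/2}(\Gamma)$ (combined suitably to sidestep interior resonances) together with the boundedness of $\mathcal{K}'$ on $H^{m-3/2}(\Gamma)$; I would nevertheless keep the PDE argument as the main line, since this variant leans on the regularity theory of boundary integral operators on merely $C^m$ surfaces.
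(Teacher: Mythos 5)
Your argument is essentially the paper's own proof: the paper likewise invokes Lemma~\ref{regular} to get $\|u\|_{H^m(\Omega^+)}\le C\|\gamma u\|_{H^{m-1/2}(\Gamma)}$ and then cites the trace theorem for $\|\partial_\nu u\|_{H^{m-3/2}(\Gamma)}\le C\|u\|_{H^m(\Omega^+)}$, which is exactly your two-step chain. Your additional care (localizing the weighted norm to $\Omega^+\cap B_R$, the Green-identity treatment of the $m=1$ endpoint, the remark on the regularity of $\nu$) only fills in details the paper delegates to the cited trace theorem, so the approach is the same.
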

\begin{proof}
By Lemma \ref{regular}, if $u^i\in H^{m-1/2}(\Gamma)$, then the solution $u\in H^m(\Omega^+)$ and there holds
$$\|u\|_{H^m(\Omega^+)}\leq C\|u^i\|_{H^{m-1/2}(\Gamma)}=C\|\gamma u\|_{H^{m-1/2}(\Gamma)}.$$
By the trace theorem \cite[pp. 163--164]{AdamsFournier:2003},  we deduce
$\|\partial_{\nu} u\|_{H^{m-3/2}(\Gamma)}\le C\|u\|_{H^m(\Omega^+)}$.
Combining these two estimates yields the desired assertion.
\end{proof}

To prove Theorem \ref{hessian}, we define the far field pattern operator $F: \mathcal{T} \rightarrow L^2(\mathbb{S}^2)$, which maps the admissible boundary $\Gamma$ from the set $\mathcal{T}$ to the far field patterns of problem \eqref{hel} for a fixed incident direction $d \in \mathbb{S}^2$ (not explicitly indicated below) and a measurement direction $\hat{x} \in \mathbb{S}^2$, defined by
$F(\Gamma) = u_\infty$ and $u_\infty$ is the far field pattern defined by \eqref{ffield}. By \cite[Theorem 6.1]{hettlich1999second}, the operator $F$ is twice continuously differentiable and 
\begin{equation*}
    d^2F(\Gamma;V_1(0);V_2(0))=u''_\infty,
\end{equation*}
for variations $V_1,V_2 \in C([0,\varepsilon);C^1(\overline{\Omega},\mathbb{R}^3))$ of the boundary $\Gamma$. The function $u''_\infty$ is the far field pattern of the radiating solution of the scattering problem
\begin{equation}\label{secondorder}
\left\{ \begin{aligned}
    \Delta u'' + k^2 u'' & =0,  \quad \mbox{in }\mathbb{R}^3 \backslash \Omega,\\
    u''& = \psi, \quad\mbox{on } \Gamma,\\
    \frac{\partial u''_s}{\partial r}-iku''_s &=o(r^{-1}),\quad \mbox{as }r\to \infty,
\end{aligned}\right.
\end{equation}
with the boundary data $\psi$ given by
\begin{align}
    \psi=&-V_{1, \nu}\frac{\partial u'_2}{\partial \nu}-V_{2, \nu}\frac{\partial u'_1}{\partial \nu}+(V_{1, \nu}V_{2, \nu}-V_{1, \tau}V_{2, \tau})\kappa \frac{\partial u}{\partial \nu}\nonumber\\
      &+(V_{1, \tau}(\tau \cdot \nabla V_{2, \tau})+V_{2, \tau}(\tau \cdot \nabla V_{1, \tau}))\frac{\partial u}{\partial \nu},\label{eqn:u''-boundary}
\end{align} 
where $u$ is the solution of the scattering problem \eqref{hel}, and $\nu$ denotes the unit outward normal to the boundary $\Gamma$. In the expression \eqref{eqn:u''-boundary}, the notation $V_{j,\nu}=V_j(0)\cdot \nu$ and $V_{j,\tau} = V_j(0) \cdot \tau$ denote the normal and tangential components of the vector $V_j(0)$, respectively, $\tau \cdot \nabla$ denotes taking the tangential gradient, and $\kappa$ denotes the curvature of the boundary $\Gamma$. Moreover, $u'_j$, for $j=1,2$, is the solution to the following boundary value problem
\begin{align}\label{firstorder0}
\left\{\begin{aligned}
    \Delta u' + k^2 u'&=0,  \quad \mbox{in }\mathbb{R}^3 \backslash \Omega,\\
    \quad u'&=-V_{j,\nu}\frac{\partial u}{\partial \nu}, \quad\mbox{on } \Gamma,\\
    \quad \frac{\partial u'}{\partial r}-iku'&=o(r^{-1}), \quad \mbox{as } r\to \infty.
\end{aligned}\right.
\end{align}
Note that the function $u_j'$ is associated with the vector field $V_j$ in the Dirichlet boundary condition. Still by \cite[p.~25]{hettlich1999second}, with the help of problem \eqref{firstorder0}, we have
\begin{equation*}
    dF(\Gamma;V(0))=u'_{\infty},
\end{equation*}
where $u'_{\infty}$ is the far field pattern of the solution $u'$ to problem \eqref{firstorder0} with the Dirichlet boundary condition $u'=-V(0)\cdot \nu \frac{\partial u}{\partial \nu}$ on the boundary $\Gamma$. For the operator $F$, we have the following useful estimates.
\begin{lemma}\label{derivativeF}
Under the assumption of Theorem \ref{hessian}, the following estimates hold
\begin{align*}
    |dF(\Gamma;V(0))|&\le C\|V(0)\|_{C^1(\Gamma;\mathbb{R}^3)}, \quad\forall  V\in C([0,\varepsilon);C^1(\overline{\Omega},\mathbb{R}^3)),\\ 
    |d^2F(\Gamma;V_1(0);V_2(0))|&\le C\|V_{1}(0)\|_{C^{3}(\Gamma,\mathbb{R}^3)}\|V_{2}(0)\|_{C^{3}(\Gamma,\mathbb{R}^3)},\,\quad \forall V_1,V_2\in C([0,\varepsilon);C^3(\overline{\Omega},\mathbb{R}^3)).
\end{align*}
\end{lemma}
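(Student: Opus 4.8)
\textbf{Proof strategy for Lemma \ref{derivativeF}.}
The plan is to bound the far field patterns $dF(\Gamma;V(0)) = u'_\infty$ and $d^2F(\Gamma;V_1(0);V_2(0)) = u''_\infty$ by tracking the Dirichlet data of the auxiliary problems \eqref{firstorder0} and \eqref{secondorder} through the chain of mapping properties established in Lemmas \ref{regular} and \ref{DtN}, together with the (standard) fact that the far field pattern is controlled by the Dirichlet data of a radiating solution. Concretely, I would first record the elementary estimate that for the exterior Dirichlet problem \eqref{hel} with data $g$ on $\Gamma$, one has $\|u_\infty\|_{C(\mathbb{S}^2)} \le \|u_\infty\|_{L^2(\mathbb{S}^2)} \le C\|g\|_{H^s(\Gamma)}$ for a suitable low regularity index $s$; this follows from \eqref{ffield}, Lemma \ref{regular} (with $m=1$) and the trace theorem, since $\Gamma \in C^5$ is more than smooth enough. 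This is the ``master inequality'' that converts every Dirichlet datum appearing below into a bound on a far field pattern.

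For the first-order bound, the Dirichlet datum in \eqref{firstorder0} is $-V_{j,\nu}\frac{\partial u}{\partial\nu}$. By Lemma \ref{DtN} with $m=5$ (using $\Gamma \in C^5$ and $u^i$ analytic, hence in $H^{9/2}(\Gamma)$), we have $\partial_\nu u \in H^{7/2}(\Gamma)$ with a bound depending only on $k,\Omega$; multiplying by $V(0)\cdot\nu$, which is controlled by $\|V(0)\|_{C^1(\Gamma;\mathbb{R}^3)}$ together with the $C^5$ (hence $C^1$) regularity of $\nu$, keeps the product in a positive-order Sobolev space on $\Gamma$ with norm $\le C\|V(0)\|_{C^1(\Gamma;\mathbb{R}^3)}$. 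Applying the master inequality to \eqref{firstorder0} then yields $|u'_\infty| \le C\|V(0)\|_{C^1(\Gamma;\mathbb{R}^3)}$, which is the first claim. Here I would invoke the product/multiplier estimates for fractional Sobolev spaces on $\Gamma$ that the excerpt says are collected in the appendix.

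For the second-order bound, I would unpack the boundary datum $\psi$ in \eqref{eqn:u''-boundary} term by term. Each term is a product of (i) components of $V_1(0),V_2(0)$ or their tangential derivatives $\tau\cdot\nabla V_{j,\tau}$, controlled by $\|V_j(0)\|_{C^3(\Gamma;\mathbb{R}^3)}$ (the loss of three derivatives here, rather than one, is exactly why the hypotheses ask for $C^3$ velocities), (ii) geometric quantities $\kappa$, $\nu$, $\tau$, bounded since $\Gamma \in C^5$, and (iii) a normal derivative of $u$ or of one of the first-order correctors $u'_1,u'_2$ on $\Gamma$. The terms involving $\partial_\nu u$ are handled as before via Lemma \ref{DtN}. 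For the terms $V_{j,\nu}\partial_\nu u'_{3-j}$, one first applies Lemma \ref{regular}/Lemma \ref{DtN} to the problem \eqref{firstorder0} for $u'_{3-j}$ — whose Dirichlet data is itself $-V_{3-j,\nu}\partial_\nu u$, shown above to lie in a positive-order space — to conclude $\partial_\nu u'_{3-j}$ lies in some $H^{s'}(\Gamma)$ with norm $\le C\|V_{3-j}(0)\|_{C^3(\Gamma;\mathbb{R}^3)}$, and then multiplies by $V_{j,\nu}$. Summing the contributions gives $\|\psi\|_{H^{s'}(\Gamma)} \le C\|V_1(0)\|_{C^3(\Gamma;\mathbb{R}^3)}\|V_2(0)\|_{C^3(\Gamma;\mathbb{R}^3)}$, and one final application of the master inequality to \eqref{secondorder} delivers the bound on $|u''_\infty|$.

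The main obstacle is bookkeeping the fractional Sobolev indices so that every product lands in a space where (a) the pointwise multiplier and tangential-derivative estimates are valid and (b) the resulting datum is still regular enough for the exterior-problem estimate and the far field representation \eqref{ffield} to apply. In particular I must check that the chain ``$C^5$ boundary $\Rightarrow$ $u \in H^5(\Omega^+)$ $\Rightarrow$ $\partial_\nu u \in H^{7/2}(\Gamma)$ $\Rightarrow$ $V_{j,\nu}\partial_\nu u$ in a positive-order space $\Rightarrow$ $u' \in H^{\cdot}(\Omega^+)$ $\Rightarrow$ $\partial_\nu u'$ on $\Gamma$'' never drops below the threshold needed for the multiplications in $\psi$; the choice $C^5$ for $\Gamma$ and $C^3$ for the velocities is precisely what makes this margin work, and verifying it is where the appendix lemmas are used. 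Everything else is a routine, if somewhat lengthy, assembly.
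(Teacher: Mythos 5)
Your proposal follows essentially the same route as the paper's proof: bound the far field pattern by the $H^{1/2}(\Gamma)$ norm of the Dirichlet data via the representation \eqref{ffield} and the DtN estimate (Lemma \ref{DtN}), then control the data $-V_{\nu}\partial_\nu u$ for \eqref{firstorder0} and the five terms of $\psi$ in \eqref{eqn:u''-boundary} using the appendix product and embedding lemmas together with the a priori regularity $\partial_\nu u\in H^{7/2}(\Gamma)$, applying the DtN bound a second time for $\partial_\nu u_j'$. The only slip is the intermediate claim $\|u_\infty\|_{C(\mathbb{S}^2)}\le\|u_\infty\|_{L^2(\mathbb{S}^2)}$, which is false as stated but immaterial, since \eqref{ffield} yields the required pointwise bound directly, exactly as in the paper.
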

\begin{proof}
First, the $C^5$ regularity of the boundary $\Gamma$ and Lemma \ref{DtN} imply that for any bounded subdomain $\Omega^+=\mathbb{R}^3 \backslash \overline{\Omega}$,
\begin{equation}\label{eqn:u-apriori}
    u\in H^5(\Omega^+),\quad \gamma u \in H^{9/2}(\Gamma) \quad \mbox{and}\quad \partial_\nu u \in H^{7/2}(\Gamma).
\end{equation}
By the definition \eqref{ffield} of the far-field pattern operator $F(\Gamma)$ and the estimates 
$\|\partial_\nu e^{-ik\hat{x}\cdot y}\|_{H^{-1/2}(\Gamma)}\leq C$ and $\|e^{-ik\hat{x}\cdot y}\|_{H^{1/2}(\Gamma)}\leq C$, we have
\begin{align*}
    |dF(\Gamma;V(0))|=&\left|\frac{1}{4\pi}\int_{\Gamma} \frac{\partial e^{-ik\hat{x}\cdot y}}{\partial \nu}u'(y) - \frac{\partial u'(y)}{\partial \nu}e^{-ik\hat{x}\cdot y} {\rm d}s(y)\right|\\
    \le & \frac{1}{4\pi}\left(\left|\int_{\Gamma} \frac{\partial e^{-ik\hat{x}\cdot y}}{\partial \nu}u'(y)ds(y)\right| + \left|\int_{\Gamma}\frac{\partial u'(y)}{\partial \nu}e^{-ik\hat{x}\cdot y} {\rm d}s(y)\right|\right)\\
    \le&C(\|\partial_\nu e^{-ik\hat{x}\cdot y}\|_{H^{-1/2}(\Gamma)}\|u'\|_{H^{1/2}(\Gamma)}+\|\partial_{\nu}u'\|_{H^{-1/2}(\Gamma)}\|e^{-ik\hat{x}\cdot y}\|_{H^{1/2}(\Gamma)})\\
    \le& C(\|u'\|_{H^{1/2}(\Gamma)}+\|\partial_{\nu}u'\|_{H^{-1/2}(\Gamma)}).
\end{align*}
The boundedness of the DtN map for the Helmholtz equation (cf. Lemma \ref{DtN}) implies
\begin{align*}
    &|dF(\Gamma;V(0))|\le C\|\gamma u'\|_{H^{1/2}(\Gamma)}=C\big\|-V_{\nu}\partial_{\nu}u \big\|_{H^{1/2}(\Gamma)}.
\end{align*}
Then by Lemma \ref{fg}, we have
\begin{align*}
     |dF(\Gamma;V(0))| \le C\big(\|V_{\nu}\|_{C(\Gamma)}\|\partial_{\nu}u\|_{H^{1/2}(\Gamma)}+\|V_{\nu}\|_{H^{1/2}(\Gamma)}\|\partial_{\nu}u\|_{C(\Gamma)}+\|V_{\nu}\|_{C(\Gamma)}\|\partial_{\nu}u\|_{C(\Gamma)}\big).
\end{align*}
Now the continuous Sobolev embedding $H^{s}(\Gamma)\hookrightarrow C^{r,\alpha-\epsilon}(\Gamma)$ for $s>1$ and $s-1=r+\alpha$ with $\alpha \in (0, 1)$ and Lemma \ref{fraction} imply
\begin{align*}
    |dF(\Gamma;V(0))| & \le C\|V_{\nu}\|_{C(\Gamma)}\|\partial_{\nu} u\|_{H^{1/2}(\Gamma)}+C\|\partial_{\nu}u\|_{H^{3/2}(\Gamma)}\big(\|V_{\nu}\|_{H^{1/2}(\Gamma)}+\|V_{\nu}\|_{C(\Gamma)}\big)\\
    &\le C\|\partial_{\nu} u\|_{H^{3/2}(\Gamma)}\big(\|V_{\nu}\|_{H^{1/2}(\Gamma)}+\|V_{\nu}\|_{C(\Gamma)}\big).
\end{align*}
This, the estimate $\|\partial_\nu u\|_{H^{7/2}(\Gamma)}\leq C$ from \eqref{eqn:u-apriori} and Lemma \ref{fraction} yield 
\begin{align}
    |dF(\Gamma;V(0))|\le&C\big(\|V_{\nu}\|_{H^{1/2}(\Gamma)}+\|V_{\nu}\|_{C(\Gamma)}\big)\le C\|V_{\nu}\|_{C^{1}(\Gamma)}\le C\|V(0)\|_{C^{1}(\Gamma)}.\label{firstDF}
\end{align}
This shows the first assertion.
Next,  by Lemma \ref{DtN}, we have
\begin{align*}
 &|d^2F(\Gamma;V_1(0);V_2(0))|=\left|\frac{1}{4\pi}\int_{\Gamma} \frac{\partial e^{-ik\hat{x}\cdot y}}{\partial \nu}u''(y) - \frac{\partial u''(y)}{\partial \nu}e^{-ik\hat{x}\cdot y} {\rm d}s(y)\right|\\
    \le&C(\|\partial_\nu e^{-ik\hat{x}\cdot y}\|_{H^{-1/2}(\Gamma)}\|\gamma u''\|_{H^{1/2}(\Gamma)}+\|\partial_{\nu}u''\|_{H^{-1/2}(\Gamma)}\|e^{-ik\hat{x}\cdot y}\|_{H^{1/2}(\Gamma)})\\
    \le& C(\|\gamma u''\|_{H^{1/2}(\Gamma)}+\|\partial_{\nu}u''\|_{H^{-1/2}(\Gamma)})
    \le C\|\gamma u''\|_{H^{1/2}(\Gamma)}
\end{align*}
From the expression of the boundary data $\psi$ in \eqref{eqn:u''-boundary} and the triangle inequality, we deduce
\begin{align*}
    &|d^2F(\Gamma;V_1(0);V_2(0))|\le C\|\gamma u''\|_{H^{1/2}(\Gamma)} =C\big\|\psi\big\|_{H^{1/2}(\Gamma)}\\
     \leq& C\Big(\big\|V_{1, \nu}\partial_{\nu}u'_2\big\|_{H^{1/2}(\Gamma)}+\big\|V_{2, \nu}\partial_{\nu}u'_1\big\|_{H^{1/2}(\Gamma)}+\big\|(V_{1, \nu}V_{2, \nu}-V_{1, \tau}V_{2, \tau})\kappa \partial_{\nu}u\big\|_{H^{1/2}(\Gamma)}\\ 
    & +\big\|V_{1, \tau}(\tau \cdot \nabla V_{2, \tau})\partial_{\nu}u \big\|_{H^{1/2}(\Gamma)}
    +\big\|V_{2, \tau}(\tau \cdot \nabla V_{1, \tau})\partial_{\nu}u\big\|_{H^{1/2}(\Gamma)}\Big):=\sum_{i=1}^5 {\rm I}_i.
\end{align*}
We bound the five terms separately. For the first term $\rm I_1$, by Lemma \ref{fg}, we have
\begin{equation*}
    {\rm I}_1  \le C\big(\|V_{1, \nu}\|_{C(\Gamma)}\|\partial_{\nu}u'_2\|_{H^{1/2}(\Gamma)}+\|\partial_{\nu}u'_2\|_{C(\Gamma)}\|V_{1, \nu}\|_{H^{1/2}(\Gamma)}+\|\partial_{\nu}u'_2\|_{C(\Gamma)}\|V_{1, \nu}\|_{C(\Gamma)}\big).
\end{equation*}
By Sobolev embedding $H^{3/2}(\Gamma)\hookrightarrow C(\Gamma)$ and Lemma \ref{fraction}, we deduce
\begin{align*}
    {\rm I}_1 
    &\le C\|\partial_{\nu} u_2'\|_{H^{3/2}(\Gamma)}\big(\|V_{1, \nu}\|_{H^{1/2}(\Gamma)}+\|V_{1, \nu}\|_{C(\Gamma)}\big).
\end{align*}
Meanwhile, by the boundedness of the DtN map (cf. Lemma \ref{DtN}), we have
\begin{align*}
    &\|\partial_{\nu} u_2'\|_{H^{3/2}(\Gamma)} \le C\|\gamma u_2'\|_{H^{5/2}(\Gamma)}=C\|V_{2,\nu}\partial_{\nu}u\|_{H^{5/2}(\Gamma)}\\
    \le&C\big(\|V_{2,\nu}\|_{C^{2}(\Gamma)}\|\partial_{\nu}u\|_{H^{5/2}(\Gamma)}+\|\partial_{\nu}u\|_{C^{2}(\Gamma)}\|V_{2,\nu}\|_{H^{5/2}(\Gamma)}
    +\|\partial_{\nu}u\|_{C^{2}(\Gamma)}\|V_{2,\nu}\|_{C^{2}(\Gamma)}\big).
\end{align*}
The regularity estimate \eqref{eqn:u-apriori}, Sobolev embedding $H^{7/2}(\Gamma)\hookrightarrow C^2(\Gamma)$, and Lemma \ref{fraction} imply
\begin{align*}
    {\rm I}_1\le&C\big(\|V_{2,\nu}\|_{H^{5/2}(\Gamma)}+\|V_{2,\nu}\|_{C^{2}(\Gamma)}\big)\big(\|V_{1,\nu}\|_{H^{1/2}(\Gamma)}+\|V_{1,\nu}\|_{C(\Gamma)}\big)\le C\|V_{2,\nu}\|_{C^{3}(\Gamma)}\|V_{1,\nu}\|_{C^{1}(\Gamma)}.
\end{align*}
The proof also gives the estimate
$ {\rm I}_2\le C\|V_{1,\nu}\|_{C^{3}(\Gamma)}\|V_{2,\nu}\|_{C^{1}(\Gamma)}$.
Next from Lemmas \ref{fg} and \ref{fraction} and the regularity estimate \eqref{eqn:u-apriori}, we deduce
\begin{align*}
    {\rm I}_3  
    \le C\|V_1(0)\|_{C^1(\Gamma)}\|V_2(0)\|_{C^1(\Gamma)}.
\end{align*}
Last, using Lemmas \ref{fg} and \ref{fraction} and \eqref{eqn:u-apriori} again yields
\begin{align*}
    {\rm I}_4 \le  C \|V_1(0)\|_{C^1(\Gamma)}\|V_2(0)\|_{C^{2}(\Gamma)}\quad \mbox{and}\quad
    {\rm I}_5  \le C \|V_1(0)\|_{C^{2}(\Gamma)}\|V_2(0)\|_{C^1(\Gamma)}.
\end{align*}
Combining these estimates yields the second assertion, completing the proof.
\end{proof}

Now we are in the position to prove Theorem \ref{hessian}.
\begin{proof}[Proof of Theorem \ref{hessian}]
It follows from the representation \eqref{shape} of the shape derivative 
and Lemmas \ref{regular} and \ref{DtN} that for any $V \in C([0,\varepsilon);C^1(\overline{\Omega},\mathbb{R}^3))$,
\begin{align*}
    &|dJ(\Gamma;V(0))|
    \le \Big(\frac{1}{L}\sum_{l=1}^L\Big\|\frac{\partial u_l}{\partial \nu}\Big\|_{L^2(\Gamma)}\Big\|\frac{\partial w_l}{\partial \nu}\Big\|_{L^2(\Gamma)}\Big)\|V(0)\cdot \nu\|_{C(\Gamma)}\\
    \le& \Big(\frac{1}{L}\sum_{l=1}^L\|u_l\|_{H^1(\Gamma)}\|w_l\|_{H^1(\Gamma)}\Big)\|V(0)\cdot \nu\|_{C(\Gamma)} \le C\|V(0)\|_{C(\Gamma;\mathbb{R}^3)}.
\end{align*}
Similarly, it follows from the identity
\begin{equation*}
    d^2J(\Gamma;V_1(0);V_2(0))=\sum_{l,m}dF_{l,m}(\Gamma;V_1(0))dF_{l,m}(\Gamma;V_2(0))+
    (F_{l,m}(\Gamma)-u_{l,m}^*)d^2F_{l,m}(\Gamma;V_1(0);V_2(0)),
\end{equation*}
and Lemma \ref{derivativeF} (for any $d_l \in S^1$ and $\hat{x}_m \in S^1$) that
\begin{align*}
    &|d^2J(\Gamma;V_1(0);V_2(0))| \\
    \le&\sum_{l,m}|dF_{l,m}(\Gamma;V_1(0))dF_{l,m}(\Gamma;V_2(0))+(F_{l,m}(\Gamma)-u^*_{l,m})d^2F_{l,m}(\Gamma;V_1(0);V_2(0))|\\
    \le& C\|V_{1}(0)\|_{C^1(\Gamma)}\|V_{2}(0)\|_{C^1(\Gamma)}+C\|V_{1}(0)\|_{C^{3}(\Gamma)}\|V_{2}(0)\|_{C^{3}(\Gamma)}\\
    \le& C\|V_{1}(0)\|_{C^{3}(\Gamma)}\|V_{2}(0)\|_{C^{3}(\Gamma)}.
\end{align*}
This completes the proof of the theorem.
\end{proof}

Now, we can give the convergence of the Adam algorithm. In the proof, we use the notation $[N]$ to denote the set $\{1,2,,,N\}$, and use $\mathcal{O}(\cdot)$ as standard asymptotic notation. 
\begin{theorem}
Suppose that the sequence $\{z_n\}_{n=0}^\infty\subset \mathbb{R}^Z$ generated by Algorithm \ref{algorithm} is uniformly bounded, and the following two assumptions hold: {\rm(i)} The algorithm can access a bounded noisy gradient $g_n$ and the true gradient $\nabla \mathcal{L}(z_n)$ is bounded, i.e. $\|\nabla \mathcal{L}(z_n)\| \le H$, $\|g_n\| \le H, \forall n > 1$; {\rm (ii)} The noisy gradient $g_n$ is unbiased and the noise is independent, i.e., $g_n = \nabla \mathcal{L}(z_n) + \zeta_n, E[\zeta_n] = 0$ and $\zeta_i$ is independent of $\zeta_j$ if $i \neq j$.
Let the assumptions of Theorem \ref{hessian} be fulfilled with hyper-parameters $\beta_1 \in [0, 1)$ non-increasing, and for some constant $G >0$, $\|\alpha_n m_n/\sqrt{\hat{v}_n}\|\le G$, for all $ n$. Then the following estimate holds
\begin{align}\label{bounded}
\begin{aligned}
    &\mathbb{E}\left[\sum_{n=1}^N \alpha_n\Big\langle\nabla \mathcal{L}    (z_n),\frac{\nabla \mathcal{L}(z_n)}{\sqrt{\hat{v}_n}}\Big\rangle\right]\\
    \le&\mathbb{E}\left[C_1 \sum_{n=1}^N\Big\|\frac{\alpha_n g_n}{\sqrt{\hat{v}_n}}\Big\|^2+C_2\sum_{n=2}^N\Big\|\frac{\alpha_n}{\sqrt{\hat{v}_n}}-\frac{\alpha_{n-1}}{\sqrt{\hat{v}_{n-1}}}\Big\|_1+C_3\sum_{n=2}^{N-1}\Big\|\frac{\alpha_n}{\sqrt{\hat{v}_n}}-\frac{\alpha_{n-1}}{\sqrt{\hat{v}_{n-1}}}\Big\|^2\right]+C_4,
\end{aligned}
\end{align}
where $C_1$, $C_2$, $C_3$ and $C_4$ are constants independent of $N$, the expectation $\mathbb{E}$ is taken with respect to the randomness of $\{g_n\}$.
Further, let $\gamma_n := \min_{j\in[Z]} \min_{\{g_i\}^n_{i=1}} \alpha_n m_n/(\sqrt{\hat{v}_n})_j$ denote the minimum possible value of effective stepsize at time $n$ over all possible coordinate and past gradients $\{g_i\}_{i=1}^{n}$. Then the convergence rate of Algorithm \ref{algorithm} is given by
\begin{eqnarray}\label{rate}
    \min_{i\in[N]} \mathbb{E}[\|\nabla \mathcal{L}(z_n)\|^2]=\mathcal{O}\Big(\frac{s_1(N)}{s_2(N)}\Big),
\end{eqnarray}
where $s_1(N)$ is the upper bound in \eqref{bounded}, and $s_2(N)=\sum_{n=1}^N \gamma_n$.
\end{theorem}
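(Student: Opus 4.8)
The plan is to reduce the statement to the now-standard nonconvex analysis of Adam-type schemes: once $\mathcal{L}$ is known to have a Lipschitz gradient on the region visited by the iterates, the estimate \eqref{bounded} follows from the descent lemma together with careful bookkeeping of the recursion \eqref{adam}, and the rate \eqref{rate} is an immediate consequence of \eqref{bounded}. Assumptions (i)--(ii) together with the hyper-parameter conditions are precisely what such an analysis consumes apart from smoothness, so the only genuinely new ingredient is to convert the shape-calculus bounds of Theorem \ref{hessian} into a bound on the Euclidean Hessian of $z\mapsto\mathcal{L}(z)$.

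First I would show that $\mathcal{L}\in C^{1,1}$ on a compact set containing the iterates. Since $\{z_n\}$ is uniformly bounded it lies in a compact $K\subset\mathbb{R}^Z$. By Theorem \ref{Lgradient}, $\partial_{z_i}\mathcal{L}(z)=dJ(\Gamma_z;V^{(i)})$, where $V^{(i)}$ is the velocity field \eqref{velo} associated with the coordinate direction $e_i$. Differentiating this relation once more in $z$ and invoking the second-order shape calculus \eqref{J2OVV} expresses each entry of $\nabla^2\mathcal{L}(z)$ as a second-order shape derivative $d^2J(\Gamma_z;\cdot;\cdot)$ evaluated at the fields \eqref{velo}, plus a first-order term $dJ(\Gamma_z;\cdot)$ evaluated at a $z$-derivative of these fields. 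Because $f_\theta$ is smooth and, by the regularity assumption, $\nabla_xf_\theta$ does not vanish on $\Gamma_z$, the fields \eqref{velo} and their $z$-derivatives are uniformly bounded in $C^3(\overline{\Omega}_z;\mathbb{R}^3)$ for $z\in K$, the surfaces $\Gamma_z$ remain uniformly of class $C^5$, and $|\nabla_xf_\theta|$ is bounded below on $\Gamma_z$ uniformly in $z\in K$; tracking these quantities through the proof of Theorem \ref{hessian} shows that its constant may be taken independent of $z\in K$. Hence $\|\nabla^2\mathcal{L}(z)\|\le L$ on $K$ for some $L>0$, so $\nabla\mathcal{L}$ is $L$-Lipschitz there. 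I expect this to be the main obstacle: it is where the intrinsic shape-derivative estimates must be coupled with the extrinsic perturbation produced by the latent variable (in particular the first-order correction term in \eqref{J2OVV}), and where one must verify that the constants of Theorem \ref{hessian} are uniform over the set of iterates.

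With $L$-smoothness available, I would apply the descent inequality $\mathcal{L}(z_{n+1})\le\mathcal{L}(z_n)-\alpha_{n+1}\langle\nabla\mathcal{L}(z_n),m_{n+1}/\sqrt{\hat v_{n+1}}\rangle+\tfrac{L}{2}\|\alpha_{n+1}m_{n+1}/\sqrt{\hat v_{n+1}}\|^2$, substitute $m_{n+1}=\beta_1 m_n+(1-\beta_1)g_n$ from \eqref{adam}, and split the middle term into a leading part proportional to $\langle\nabla\mathcal{L}(z_n),g_n/\sqrt{\hat v_n}\rangle$ and correction terms. The corrections are controlled by the monotonicity of $\beta_1$, the uniform bound $\|\alpha_n m_n/\sqrt{\hat v_n}\|\le G$, and the gradient bound $H$; conditioning on the past and using assumption (ii) (unbiasedness and independence of $\zeta_n$) replaces $g_n$ by $\nabla\mathcal{L}(z_n)$ in the leading term up to terms already accounted for, and the correlation between $\hat v_n$ and $\zeta_{n-1}$ is absorbed via the same $G$- and $H$-bounds. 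Summing over $n=1,\dots,N$, the left-hand side telescopes and, since $\mathcal{L}\ge0$ by \eqref{Jz}, is bounded by $\mathcal{L}(z_1)$; collecting the corrections into the three sums appearing in \eqref{bounded} and the residual constants into $C_4$ yields \eqref{bounded}, with $C_1,\dots,C_4$ depending only on $L,H,G,\beta_1$ and $\mathcal{L}(z_1)$ and not on $N$.

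Finally, for \eqref{rate} I would use that $\|g_n\|\le H$ forces every coordinate of $\hat v_n$ to be bounded above, so that $\alpha_n\langle\nabla\mathcal{L}(z_n),\nabla\mathcal{L}(z_n)/\sqrt{\hat v_n}\rangle\ge\gamma_n\|\nabla\mathcal{L}(z_n)\|^2$ with $\gamma_n$ the minimum effective stepsize of the statement, which is deterministic because it is a minimum over all admissible past gradient sequences. Taking expectations and chaining with \eqref{bounded} gives $\bigl(\min_{i\in[N]}\mathbb{E}[\|\nabla\mathcal{L}(z_i)\|^2]\bigr)\,s_2(N)\le\sum_{n=1}^N\gamma_n\,\mathbb{E}[\|\nabla\mathcal{L}(z_n)\|^2]\le s_1(N)$, that is, $\min_{i\in[N]}\mathbb{E}[\|\nabla\mathcal{L}(z_i)\|^2]=\mathcal{O}(s_1(N)/s_2(N))$, which is \eqref{rate}.
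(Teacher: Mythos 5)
Your proposal takes essentially the same route as the paper: the substantive step is identical, namely expressing $\partial^2_{z_iz_j}\mathcal{L}$ through the identity \eqref{J2OVV} applied to the velocity fields \eqref{velo} (plus the first-order correction at the $z$-derivative of the field) and invoking Theorem \ref{hessian} together with the boundedness of the iterates and the smoothness of $f_\theta$ to get a uniform Hessian bound, hence an $L$-Lipschitz gradient. The only difference is that the paper then concludes \eqref{bounded} and \eqref{rate} by directly citing the Adam-type convergence result \cite[Theorem 3.1]{chen2019convergence}, whereas you sketch a re-derivation of that result via the descent lemma and the minimum-effective-stepsize argument; that re-derivation is consistent with the cited theorem but is not needed.
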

\begin{proof}
Since the loss $\mathcal{L}(z)$ is nonnegative, it is bounded below by zero, and there exists an infimum to $\mathcal{L}(z)$. If $\mathcal{L}$ is differentiable and has an $L$-Lipschitz gradient, i.e., 
\begin{equation*}
\|\nabla \mathcal{L}(z)-\nabla \mathcal{L}(\tilde z) \| \le L\|z - \tilde z\|,\quad \forall z,\tilde z,
\end{equation*}
the desired assertions \eqref{bounded} and \eqref{rate} are direct from  \cite[Theorem 3.1]{chen2019convergence}. So it suffices to show the $L$-Lipschitz continuity of the loss $\mathcal{L}(z)$. First, we define the three vector fields $V_1$, $V_2$, and $W$ by
\begin{align*}
    V_1[z](t,x)&=-\frac{\nabla_x f_\theta(z+t e_i,x)}{\|\nabla_x f_\theta(z+t e_i,x) \|^2} \frac{\partial f_\theta}{\partial z_i}(z+t e_i,x),\\
    V_2[z](t,x)&=-\frac{\nabla_x f_\theta(z+t e_j,x)}{\|\nabla_x f_\theta(z+t e_j,x) \|^2} \frac{\partial f_\theta}{\partial z_j}(z+t e_j,x),\\
    W(t,x)&=-\frac{\nabla_x f_\theta(z+te_j,x)}{\|\nabla_x f_\theta(z+te_j,x) \|^2} \frac{\partial f_\theta}{\partial z_i}(z+te_j,x).
\end{align*}
Then we have $W(t,x)=V_1[z+te_j](0,x)$ with $W(0,x)=V_1(0,x)$.
It follows from \eqref{gradient1} that $\frac{\partial \mathcal{L}}{\partial {z_i}}(z)=dJ(\Gamma_z;V_1[z])$, and 
\begin{align*}
    \frac{\partial^2 \mathcal{L}(z)}{\partial {z_i} \partial {z_j}}=&\frac{\partial }{\partial {z_j}}dJ(\Gamma_z;V_1[z])
    =\lim_{t\rightarrow 0^+}\frac{dJ(\Gamma_{z+te_j};V_1[z+te_j](0))-dJ(\Gamma_z;V_1)}{t}\\
    =&\lim_{t\rightarrow 0^+}\frac{dJ(\Gamma_{t}(V_2);W(t))-dJ(\Gamma_z;V_1)}{t}
    =d^2J(\Gamma_z;W;V_2).
\end{align*}
Then by \eqref{J2OVV}, we have
\begin{align*}
    \frac{\partial^2 \mathcal{L}(z)}{\partial {z_i} \partial {z_j}}
    =&d^2J(\Gamma_z;W(0);V_2(0))+d^2J(\Gamma;W'(0))\\
    =&d^2J(\Gamma_z;V_1(0);V_2(0))+dJ(\Gamma;W'(0)),
\end{align*}
with $W'(0)(x)$ given by
\begin{align*}
    W'(0)(x)=&\lim_{t\rightarrow0^+}\frac{W(t,x)-W(0,x)}{t}
    =-\frac{\partial}{\partial z_j}\Big[\frac{\nabla_x f_\theta}{\|\nabla_x f_\theta\|^2} \frac{\partial f_\theta}{\partial z_i}\Big](z,x).
\end{align*}
Now it follows from Theorem \ref{hessian} that
\begin{align*}
    \left|\frac{\partial^2 \mathcal{L}}{\partial {z_i} \partial {z_j}}\right|\le&|d^2J(\Gamma_z;V_1(0);V_2(0))+dJ(\Gamma_z;W'(0))|\\ \le&C\|V_1(0)\|_{C^{3}(\Gamma;\mathbb{R}^3)}\|V_2(0)\|_{C^{3}(\Gamma;\mathbb{R}^3)}+C\|W'(0)\|_{C^1(\Gamma)}\\
    \le & C\left\|\frac{\nabla_z f_\theta}{||\nabla_x f_\theta||}\right\|_{C^{3}(\Gamma;\mathbb{R}^3)}^2+C\left\|\frac{\partial}{\partial z_j}\Big[\frac{\nabla_x f_\theta}{\|\nabla_x f_\theta\|^2} \frac{\partial f_\theta}{\partial z_i}\Big]\right\|_{C^1(\Gamma)}.
\end{align*}
Since the label $z$ is bounded, by the smoothness of the DeepSDF $f_\theta(z,x)$, there exists a constant $L$ such that
$\|\nabla^2 \mathcal{L}(z)\|\le L$ for all $z$.
Finally, this inequality and  \cite[Theorem 3.1]{chen2019convergence} imply the desired assertions \eqref{bounded} and \eqref{rate}.
\end{proof}

\section{Numerical experiments and discussions}\label{results}

In this section, we present numerical results to illustrate the performance and distinct features of the proposed approach, including backscattering and phaseless data.

\subsection{The training of the neural network}
We train the DeepSDF following \cite{park2019deepsdf},  with two scenes from dataset Shapenet \cite{chang2015shapenet}, a richly-annotated, large-scale repository of shapes represented by 3D CAD models of objects. In the first scene (Scenario 1), there are 1780 surfaces in the training dataset randomly selected from the Shapenet airplane dataset, and the test data is also randomly selected in the Shapenet airplane dataset (but not in the training dataset). In the second scene (Scenario 2), there are 1258 surfaces in the car dataset. We use the implementation provided by DeepSDF \cite{park2019deepsdf} to train the latent representation of the surfaces. We learn the DeepSDF $f_\theta(z,x)$ using the ADAM algorithm \cite{KingmaBa:2015} with a step decay learning rate schedule $\alpha_n=\text{5.0e-4}/{2^{[n//500]}}$, i.e., the learning rate $\alpha_n$ is halved every 500 epochs. The dimensionality $Z$ of the latent variable $z$ is 256.

To show that the learned shape embedding DeepSDF $f_\theta(z,x)$ is continuous, we render the results of the decoder when a pair of shapes are interpolated in the latent vector space; see Figure \ref{continuous} for an illustration. For two latent $z_1,z_2$, we take the interpolated latent value $z=(z_1+z_2)/2$. The plot shows that the embedded continuous SDF $f_\theta(z,x)$ does produce meaningful shapes and that the latent representation extracts common interpretable shape features, e.g., plane wings, that interpolate linearly in the latent space. This property is needed for solving ISP.

\begin{figure}[htb!]
    \centering
    \includegraphics{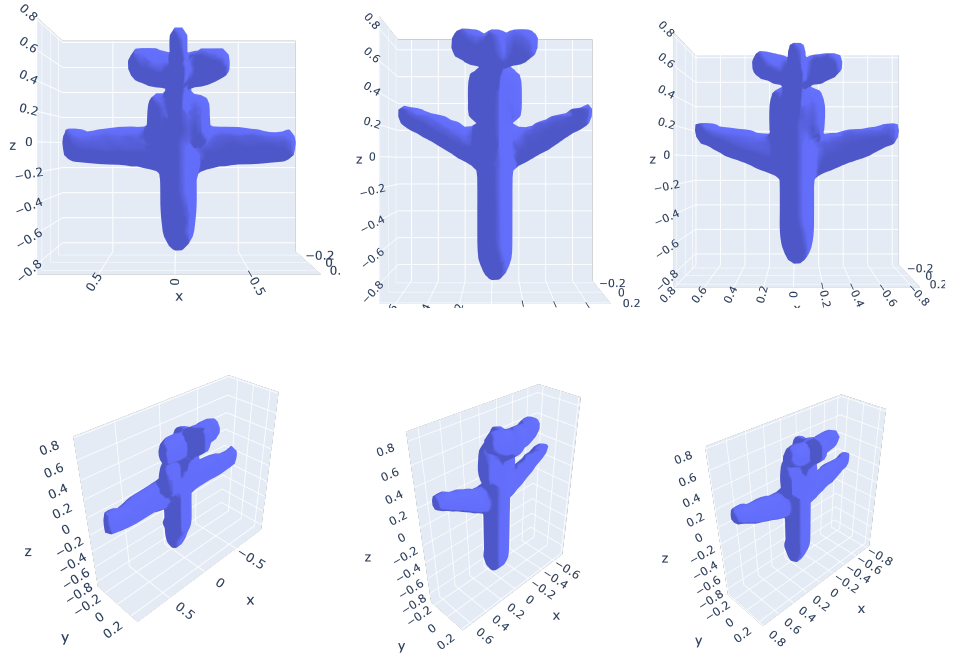}
    \caption{The interpolation between the first and second columns in the learned shape latent space, shown in the third column, from two view angles.}
    \label{continuous}
\end{figure}

\subsection{Numerical examples}
Now we present numerical examples to show the effectiveness of the proposed method. Throughout the numerical experiments, we employ the following experimental setup. We assume that the scatterers are contained in the cube $(-1,1)^3$. With the learned SDF $f_\theta(z,x)$, we use the marching cube algorithm \cite{lorensen1987marching} to generate the surface mesh. To use the marching cube algorithm, the domain $(-1,1)^3$ is discretized using a Cartesian grid with a grid spacing of 0.06. Then we use the boundary element method library BEMPP-CL \cite{betcke2021bempp} to solve the boundary integral equation for problems \eqref{hel} and \eqref{adjoint} to compute the gradient $\nabla \mathcal{L}(z)$ of the loss $\mathcal{L}(z)$ by \eqref{gradient}. We employ the ADAM algorithm implemented in the \texttt{PyTorch.optimizer} package to update the latent variable $z$, with a constant learning rate $\alpha=0.01$ and the dimensionality $Z$ of the latent variable $z$ is 256.

To solve the direct and adjoint problems (for computing the shape derivative), we employ the boundary element method, for which we use a Galerkin formulation of equation \eqref{bie} then approximate the density field on the mesh of the surface with discontinuous Galerkin piecewise constant functions and solve the resulting linear system by the GMRES algorithm with a relative error 1e-5. These choices are used for both generating the data and also in numerical
inversion. The public software package BEMPP \cite{betcke2021bempp} provides the necessary implementations of boundary element spaces, potentials, and boundary operators. For an overview of the library, see
\cite{smigaj2015solving}. The scattering from an impenetrable obstacle can be implemented in BEMPP using a few lines of code. The actual implementation in BEMPP of the combined integral equation \eqref{bie} can be found in the Jupyter notebook on the homepage of BEMPP (https://bempp.com). More details about the specific BEM implementation and algorithmic parameter settings of the proposed latent representation method can be found at the GitHub page of the project \url{https://github.com/liuhaibogit/Latent_ISP}.

Note that in practice, 
we first generate the surface mesh from the latent representation using the marching cubes algorithm\cite{lorensen1987marching}, which is then directly taken to be the boundary element surface mesh. So the number of BEM mesh elements varies with the iteration, which however do not change much throughout the iterations,  and the inverse crime is automatically alleviated. In Figure \ref{grid}, we show one example surface mesh  (which is employed to resolve the direct and adjoint problems in the experiment in scenario 1 below), obtained by using the marching cube algorithm, with the target surface  discretized into a triangular mesh. 
Numerically, it can be verified that the relative error of the forward solver for approximating the far field pattern is about 1\%. 


\begin{figure}[htb!]
    \centering
    \includegraphics[width=.8\textwidth]{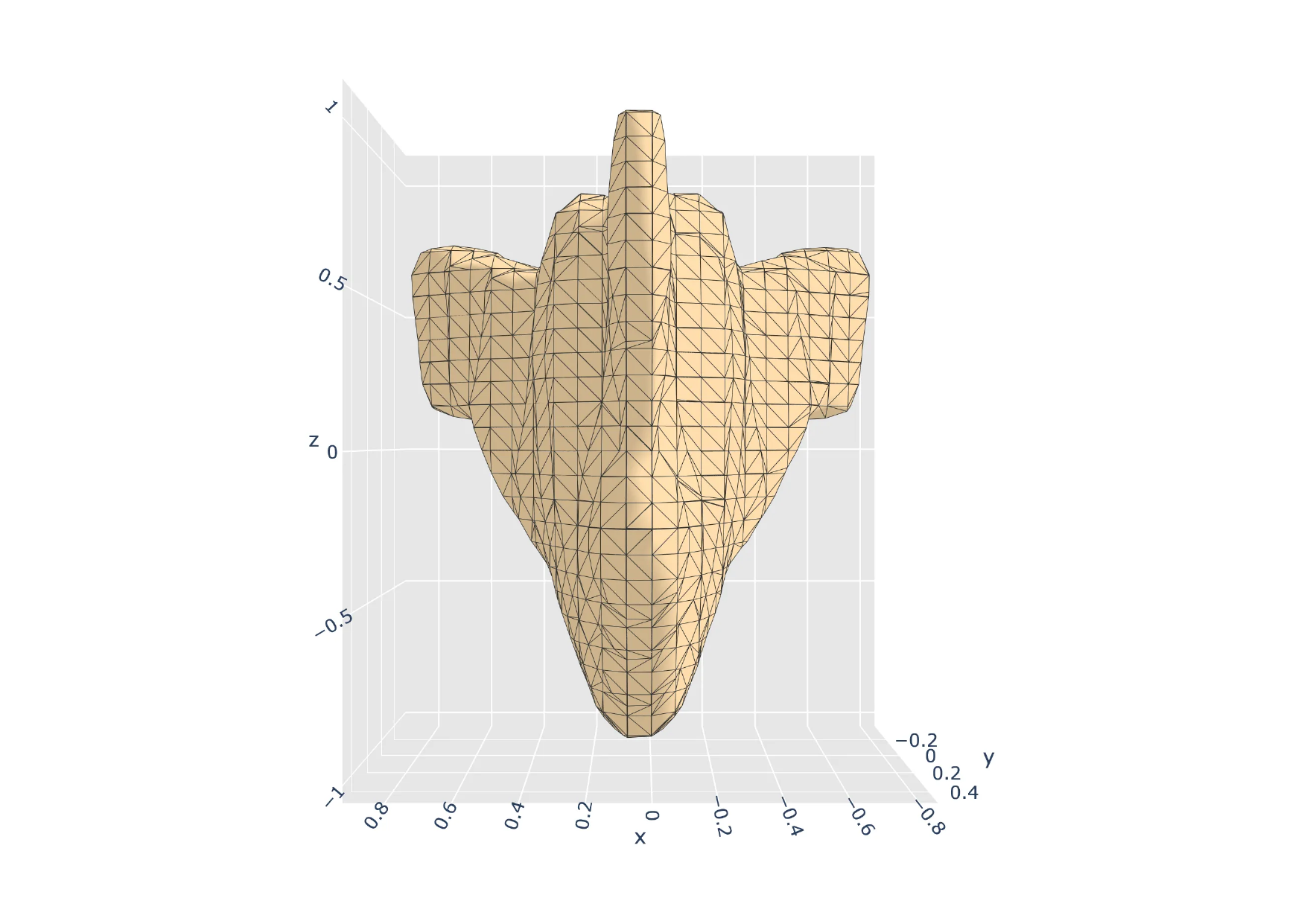}
    \caption{Example triangular mesh with 1190 vertices and 2376 faces.}
    \label{grid}
\end{figure}

The wave number $k$ of the incident wave is chosen to be $k = 5\pi$ unless otherwise stated. For far-field data, we transmit $L$ equidistant incident plane waves on a full aperture and collect measurements at $M$ equidistant angles on a full aperture for each incident wave. 
The equidistant directions for incident waves and measurement on the unit sphere $\mathbb{S}^2$ are generated using the Fibonacci lattice method \cite{gonzalez2010measurement}. Specifically, to generate $N$ equidistant points $\{x_n=(x^1_n,x^2_n,x^2_n)\}_{n=1}^N\in\mathbb{R}^3$ on unit sphere $\mathbb{S}^2$ for $N=L$ or $M$, the points $x_n$ in polar coordinates are given explicitly by
\begin{eqnarray}
    \left\{
    \begin{aligned}
        x^3_n&=(2n-1)/N-1,\\
        x^1_n&=\sqrt{1-(x^3_n)^2} \cdot \cos{2\pi n \phi},\\
        x^2_n&=\sqrt{1-(x_n^3)^2} \cdot \sin{2\pi n \phi},
    \end{aligned}
    \right.\label{fibona}
\end{eqnarray}
where $\phi=(\sqrt{5}+1)/2$ is the golden ratio. 
To generate the exact data $u^\infty_{\Omega^*}(\hat{x}_m,d_l)$ for $\hat{x}_m, d_l\in \mathbb{S}^2$ and $m=1,2,...M, l=1,2,..., L$, we use the BEMPP-CL package to solve problem \eqref{hel} with exact domain $\Omega^*$. 
We also investigate the performance of the proposed method with respect to the noise. We generate the noisy data $u_{ml}^\delta$ by 
$u^\delta_{ml} = (1+\delta \xi_{ml})u^\infty_{\Omega^*}(\hat{x}_m,d_l)$,
where $\xi_{ml}$ follows the Gaussian random variable with zero mean and unity variance, and $\delta>0$ controls the relative noise level. To measure the accuracy of  the reconstructed surface $\hat S$ relative to the exact one $S^\dag$, we employ the indicator error:
\begin{eqnarray*}
    e(\hat S)=\sum_{x_i}\|\hat S(x_i)-S^\dag(x_i)\|_2,
\end{eqnarray*}
where $x_i$s are the grid points on a uniform grid and 
\begin{eqnarray*}
    S(x)=\left\{
    \begin{aligned}
        1,&\quad \text{if $x$ is outside surface $S$},\\
        0,&\quad \text{if $x$ is inside surface $S$}.
    \end{aligned}
    \right.
\end{eqnarray*}

Now we present and discuss the reconstruction results in more detail. Note that we initialize latent variable $z$ by randomly selecting a surface in the training set, which may either result in an initial guess with fine details or one relatively smooth, depending on the value of $z$. The numerical experiments below indicate that the performance of the algorithm is robust with respect to the initialization of $z$.

\subsubsection{Scenario 1}
We choose one sample from the test set about the airplane and present the reconstructions and convergence behavior of the iterative algorithm.  We transmit $L=4$ equidistant incident plane waves on a full aperture and collect measurements at $M=100$ equidistant angles on a full aperture for each incident wave. Figure \ref{intermidiate1} shows the target and intermediate recovery results at different iterations. The recovered surfaces approach the exact target as the iteration of Algorithm \ref{algorithm} proceeds, and the general shape can be recovered in tens of iterations. Figures \ref{example1:result} and \ref{example1:noiseresult} show the reconstructions of a target using exact data and data with a noise level $\delta=40\%$, respectively. The reconstructions are shown in two different view angles. The first column shows the initial guess for Algorithm \ref{algorithm}, the second column the final reconstructions, and the third column the target in order to provide more complete pictures. The convergence of the algorithm versus the iteration number is shown in Figure \ref{example1:error}, in terms of the loss $\mathcal{L}(z)$  and the indicator error $e$. These results indicate that with the ADAM algorithm, the loss $\mathcal{L}(z)$ and the indicator error $e$ decrease rapidly, for both exact and noisy data. The algorithm generally converges stably, but the loss  $\mathcal{L}(z)$ may exhibit some oscillations. The oscillations on the loss curve $\mathcal{L}(z)$ might be due to a too large learning rate (as is the case for the experiment) and numerical errors in computing the gradient (e.g., too coarse mesh, low-order elements and inexact solution of the BEM linear system). The oscillations can be reduced by using a smaller step size, but not completely removed. The presence of oscillations in the loss curve may cause difficulty in terminating the iteration. In the experiment, we have terminated the iteration in an ad-hoc trial and error manner such that the obtained reconstruction is accurate. Furthermore, Algorithm \ref{algorithm} is quite stable with respect to the presence of data noise: even using very noisy data, we can still reconstruct the object successfully. To further illustrate the robustness of the proposed approach, we show in Figures \ref{revise_init} and \ref{revise_init_error} the results with a smooth initial guess. It is worth observing that even when initialized with a surface with a simple structure, the algorithm can still recover the target very well, due to the use of the latent representation. Although not presented, similar observations can be drawn for other examples.

\begin{figure}[htb!]
    \centering
    \includegraphics{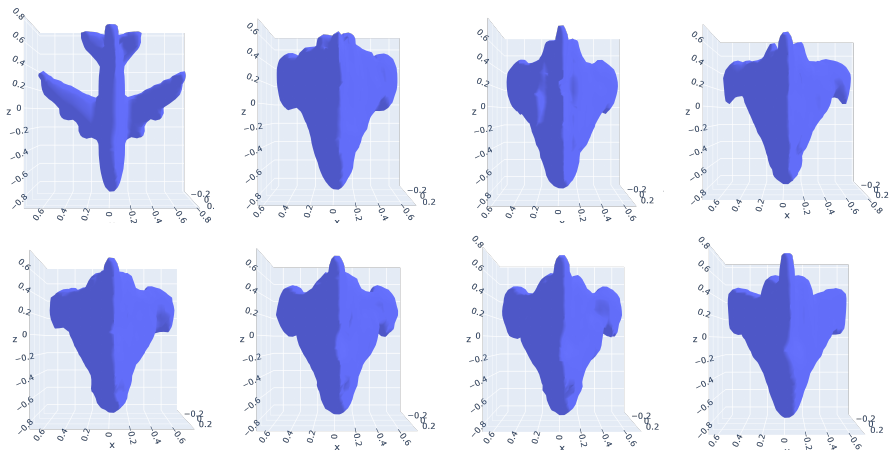}
    \caption{The convergence of the optimization algorithm for Scenario 1: the intermediate reconstruction at iterations: 0, 26, 53, 80, 107, 134, and 161 (ordered from left to right and from top to bottom), and the last plot denotes the exact target.}
    \label{intermidiate1}
\end{figure}

\begin{figure}[htb!]
    \centering
    \includegraphics{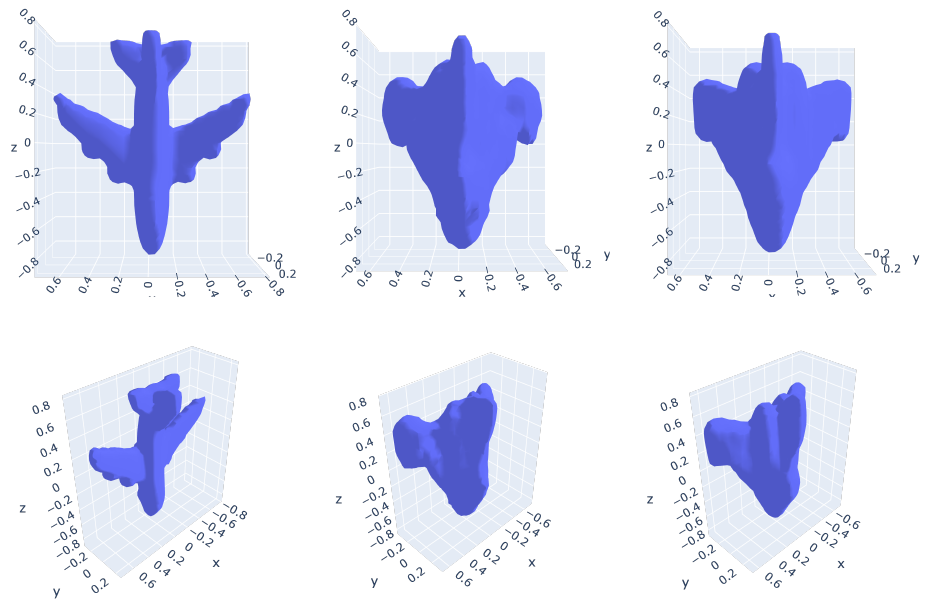}
    \caption{The reconstructions for Scenario 1 with exact data: the left, middle, and right columns refer to the initial, the optimal recovery, and the ground truth target, respectively.}
    \label{example1:result}
\end{figure}

\begin{figure}[htb!]
    \centering
    \includegraphics{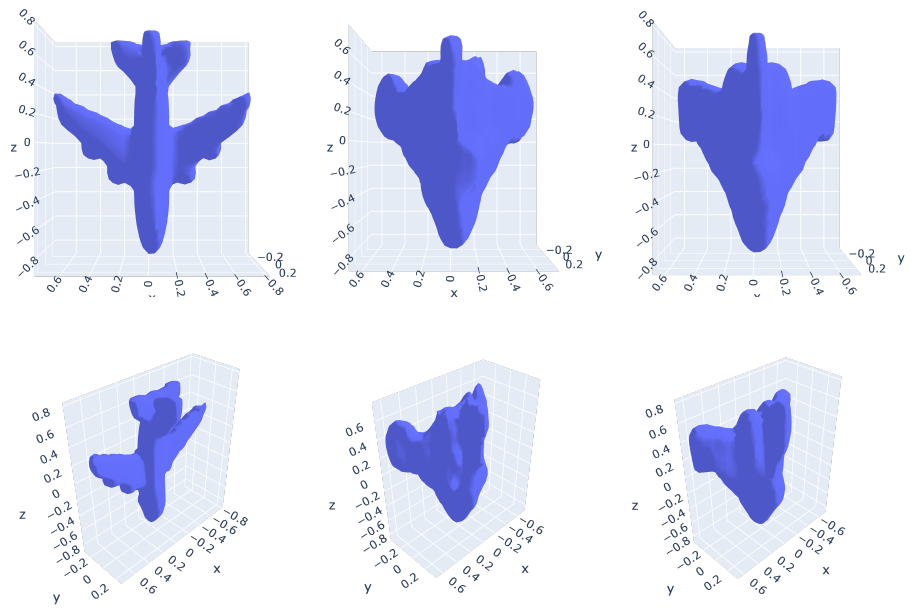}
    \caption{The reconstructions for Scenario 1 with 40\% noise in the data: the left, middle, and right columns refer to the initial, the optimal recovery, and the ground truth target, respectively.}
    \label{example1:noiseresult}
\end{figure}

\begin{figure}[htb!]
    \centering
    \begin{tabular}{cc}
    \includegraphics[width=.47\textwidth]{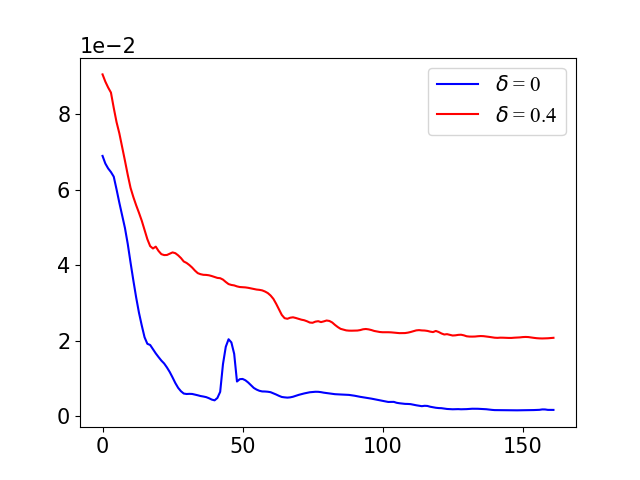}  & \includegraphics[width=.47\textwidth]{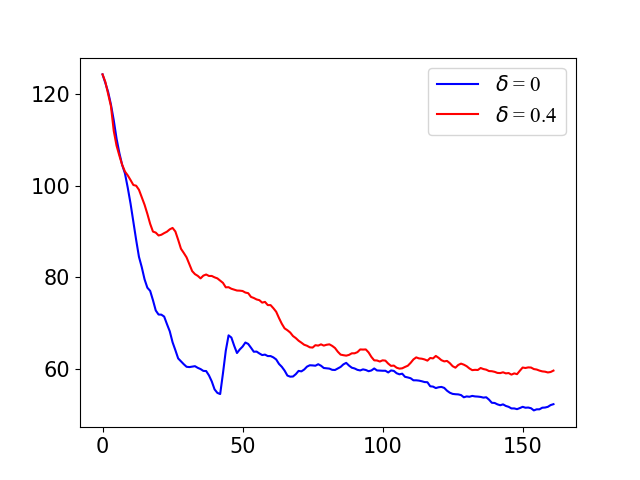} \\
    $\mathcal{L}(z^n)$ v.s. $n$  & $e$ v.s. $n$ \end{tabular}
    \caption{Convergence of the ADAM algorithm for Scenario 1 in terms of the loss $\mathcal{L}(z)$ (left) and the indicator error $e$ (right). }
    \label{example1:error}
\end{figure}

\begin{figure}[htb!]
    \centering
    \includegraphics{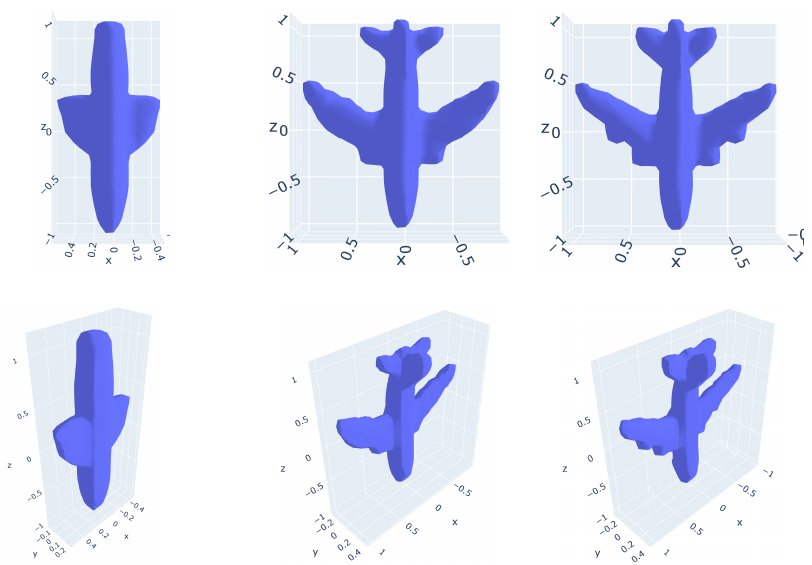}
    \caption{The reconstructions for Scenario 1 with a simple initial surface: the left, middle, and right columns refer to the initial, the optimal recovery, and the ground truth target, respectively.}
    \label{revise_init}
\end{figure}

\begin{figure}[htb!]
    \centering
    \begin{tabular}{cc}
    \includegraphics[width=.48\textwidth]{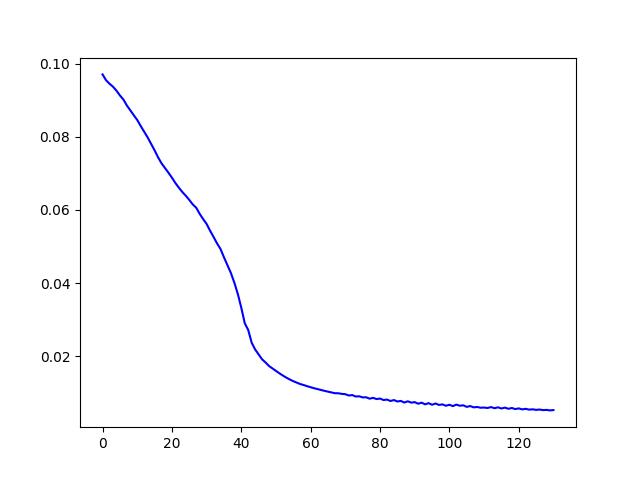}  & \includegraphics[width=.48\textwidth]{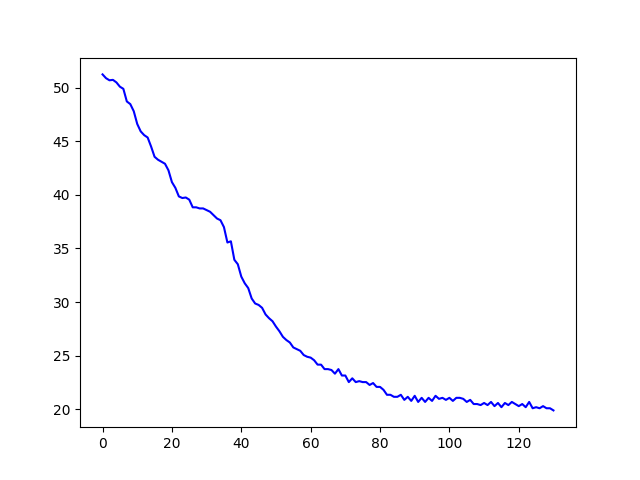} \\
    $\mathcal{L}(z^n)$ v.s. $n$  & $e$ v.s. $n$ \end{tabular}
    \caption{Convergence of the ADAM algorithm for Scenario 1 in terms of the loss $\mathcal{L}(z)$ (left) and the indicator error $e$ (right).}
    \label{revise_init_error}
\end{figure}

\subsubsection{Scenario 2}
Likewise, we also choose one sample from the test set about the car and present the reconstructions and convergence. We transmit $L=4$ equidistant incident plane waves on a full aperture and collect measurements at $M=100$ equidistant angles on a full aperture for each incident wave. Figures \ref{example3:result} and \ref{example3:noiseresult} show the reconstruction of the target using exact and noisy data with 40\% noise, respectively, and Figure \ref{example3:error} for the convergence of the algorithm. The convergence plot shows that the ADAM algorithm can decrease the loss $\mathcal{L}(z)$ and the indicator error $e$ rapidly, and it enjoys remarkable robustness with respect to data noise: even for data with 40\% noise, we can still reconstruct the object successfully.

\begin{figure}[hbt!]
    \centering
    \includegraphics{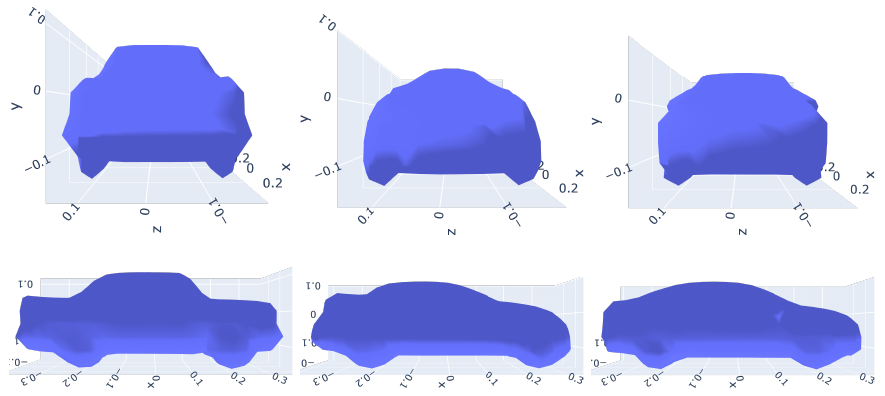}
    \caption{The reconstructions for Scenario 2 with exact data: the left, middle, and right columns refer to the initial, the optimal recovery, and the ground truth target, respectively.}
    \label{example3:result}
\end{figure}

\begin{figure}[hbt!]
    \centering
    \includegraphics{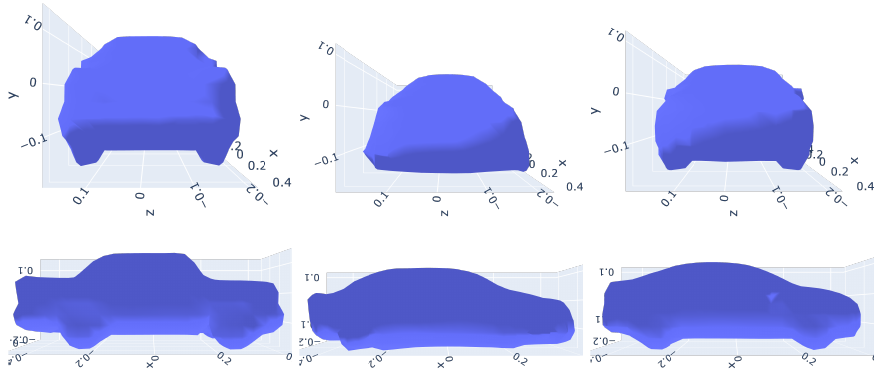}
    \caption{The reconstruction for Scenario 2 with 40\% noise in the data: the left, middle, and right columns refer to the initial, the optimal recovery, and the ground truth target, respectively.}
    \label{example3:noiseresult}
\end{figure}

\begin{figure}[hbt!]
    \centering
    \begin{tabular}{cc}
    \includegraphics[width=.48\textwidth]{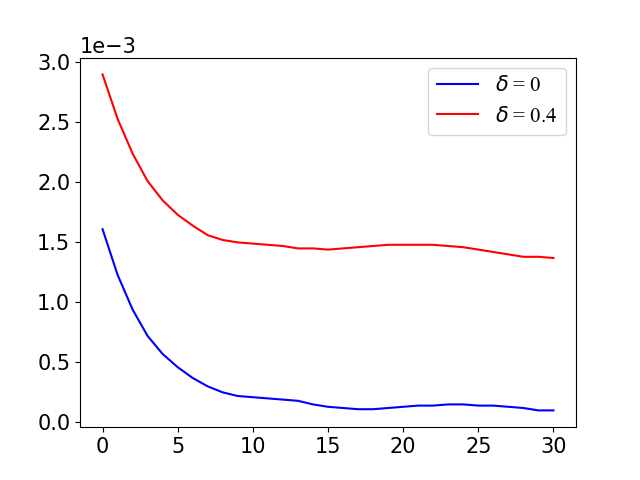}& \includegraphics[width=.48\textwidth]{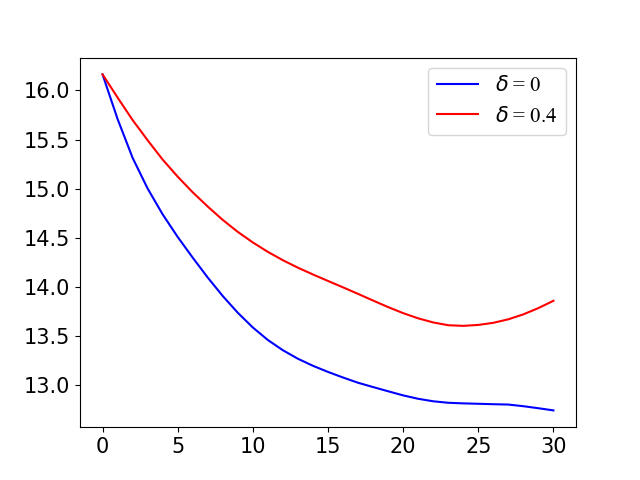}\\
     $\mathcal{L}(z^n)$ v.s. $n$  & $e$ v.s. $n$ \end{tabular}
    \caption{Convergence of the ADAM algorithm for Scenario 2 in terms of the loss $\mathcal{L}(z)$ (left) and the indicator error $e$ (right). }
    \label{example3:error}
\end{figure}

In both scenarios, we observe that the reconstruction errors are highly stable with respect to the noise (up to 40\% noise level), and the recovered shape has a relatively complex structure that simple hand-crafted priors cannot effectively encode. The observed stability property is remarkable for iterative methods for inverse scattering problems since ISP is severely ill-posed. It is attributed to the strong prior imposed by the latent representation.

\subsubsection{Reconstruction with backscattering far field data}
Now we test the proposed algorithm with the backscattering data. Backscattering means that for each incident direction $d$, we only measure the far field data $u_\infty$ at direction $-d$, and hence the available data are severely missing. See \cite{KlibanovKolesovNguyen:2019,DouLiu:2022} for existing reconstruction approaches with backscattering data. With backscattering far field data, the total field $u_l$ for the scattering problem \eqref{hel} with $d = d_l$ and the corresponding adjoint solution $w_l$ of \eqref{adjoint} satisfies
\begin{eqnarray*}
    w_l=\frac{1}{4\pi}\overline{ \Big(u^\infty_{\Omega_z}(-d_l,d_l)-u^\infty_{\Omega^*}(-d_l,d_l)\Big)}u_l.
\end{eqnarray*}
Similar to \eqref{gradient}, the gradient $\nabla\mathcal{L}(z)$ of the loss $\mathcal{L}(z)$ can be directly computed as
\begin{eqnarray*}
    \nabla \mathcal{L}(z)=-\frac{1}{4\pi L}\Re\left(\sum_{l=1}^L\overline{ \Big(u^\infty_{\Omega_z}(-d_l,d_l)-u^\infty_{\Omega^*}(-d_l,d_l)\Big)} \int_{\Gamma_z}\big({\frac{\partial u_l}{\partial \nu}\big)^2 \frac{\nabla_z f_\theta}{\|\nabla_x f_\theta\|} {\rm d}s}\right).
\end{eqnarray*} 
Thus the proposed algorithm still applies. We test the algorithm on Scenario 1. The reconstruction results for 4 equidistant incident plane waves on a full aperture are shown in the first row of Figure {\ref{Back}}. The results only give a rough shape of the target obstacle, but lack sufficient details, especially in the plane's tail, due to a lack of data. Therefore, we increase the number of transmitters to 20, and the results are shown in the second row of Figure {\ref{Back}} and Figure {\ref{Back_error}}. The reconstructions become much more accurate than that forthe case of the four transmitted incident waves. Hence, the algorithm can still succeed when using enough backscattering far-field data. This again shows the effectiveness and robustness of the proposed algorithm.

\begin{figure}[htb!]
    \centering
    \includegraphics{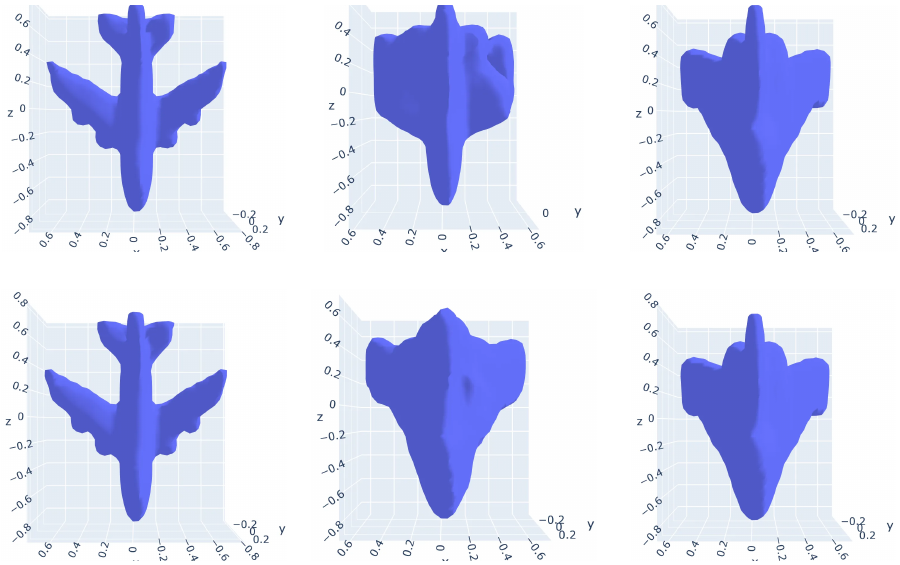}
    \caption{The reconstructions with backscattering far field data: the left, middle, and right columns refer to the initial, the optimal recovery, and the ground truth target, respectively.}
    \label{Back}
\end{figure}

\begin{figure}[hbt!]
    \centering
    \begin{tabular}{cc}
    \includegraphics[width=.48\textwidth]{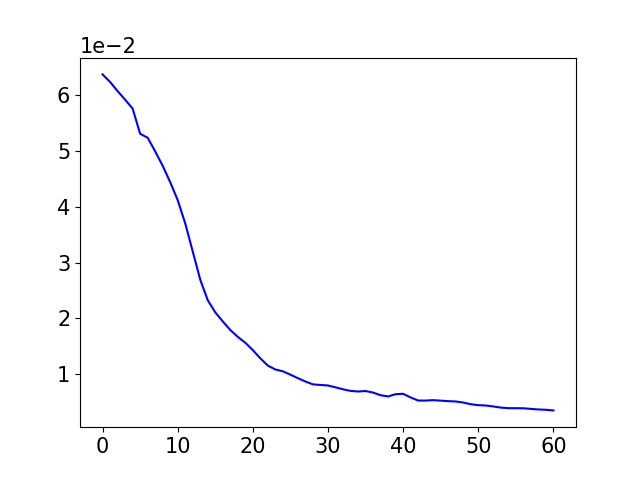}& \includegraphics[width=.48\textwidth]{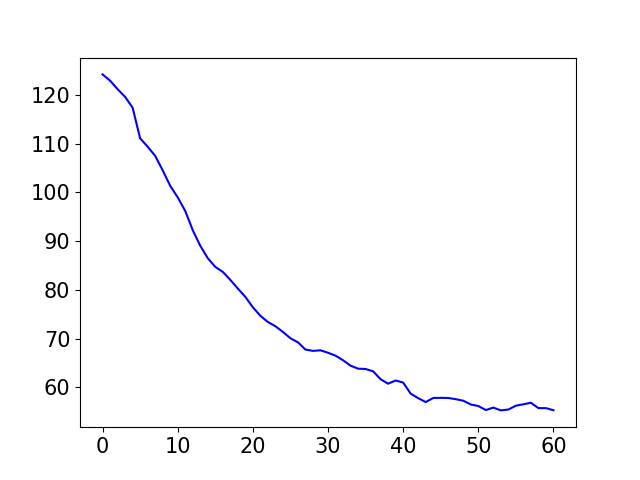}\\
     $\mathcal{L}(z^n)$ v.s. $n$  & $e$ v.s. $n$ \end{tabular}
    \caption{Convergence of the ADAM algorithm with backscattering far field data in terms of the loss $\mathcal{L}(z)$ (left) and the indicator error $e$ (right).}
    \label{Back_error}
\end{figure}

\subsection{Reconstructions with phaseless data}
Last, we illustrate the approach with phaseless data. In many practical applications, the phase of the far-field pattern cannot be measured accurately when compared with its modulus or intensity. Therefore, it is often desirable to reconstruct the scattering obstacles from phaseless data; see the works \cite{KressRundell:1997,ivanyshyn2007shape,Klibanov:2014,Novikov:2015,AmmariChowZou:2016} for uniqueness and reconstruction techniques with phaseless data. Following the work \cite{audibert2022accelerated}, the cost functional $\mathcal{L}(z)$ with phaseless data is taken to be
\begin{eqnarray*}
    \mathcal{L}(z)=J(\Gamma_z)=\frac{1}{2LM} \sum_{l=1}^L \sum_{m=1}^M\Big| \frac{|u^\infty_{\Omega_z}(\hat{x}_m,d_l)|^2}{\sqrt{|u^\infty_{\Omega_z}(\hat{x}_m,d_l)|^2+\epsilon}}-\frac{|u^\infty_{\Omega^\star}(\hat{x}_m,d_l)|^2}{\sqrt{|u^\infty_{\Omega^\star}(\hat{x}_m,d_l)|^2+\epsilon}}\Big|^2,
\end{eqnarray*}
where $\epsilon>0$ is a small number to ensure the differentiability.
The gradient can be calculated by
\eqref{gradient} and the incident wave $w_i$ for the adjoint problem \eqref{adjoint} is
\begin{align*}
    w_l^i(y)=&\frac{1}{4\pi M}\sum_{m=1}^{M} \left(\frac{|u^\infty_{\Omega_z}(\hat{x}_m,d_l)|^2}{\sqrt{|u^\infty_{\Omega_z}(\hat{x}_m,d_l)|^2+\epsilon}}-\frac{|u^\infty_{\Omega^\star}(\hat{x}_m,d_l)|^2}{\sqrt{|u^\infty_{\Omega^\star}(\hat{x}_m,d_l)|^2+\epsilon}}\right)\\
    &\times\frac{|u^\infty_{\Omega_z}(\hat{x}_m,d_l)|^2+2\epsilon}{(|u^\infty_{\Omega_z}(\hat{x}_m,d_l)|^2+\epsilon)^{3/2}} \overline{u^\infty_{\Omega_z}(\hat{x}_m,d_l)}  e^{-ik\hat{x}_m \cdot y}.
\end{align*}
The reconstruction algorithm is almost the same except the cost function and adjoint field $w_l$, and we test Algorithm \ref{algorithm} for
Scenario 1.  We use 10 equidistant transmitters and collect phaseless measurements at 300 equidistant angles on a full aperture for each incident wave. Due to the translation invariance of phaseless data, one can only reconstruct the shapes up to a translation (which is the same for all connected components). The reconstructed results are shown in Figures {\ref{phase} and \ref{phase:error}}. It is observed that the algorithm can still recover the boundary well.

\begin{figure}[htb!]
    \centering
    \includegraphics{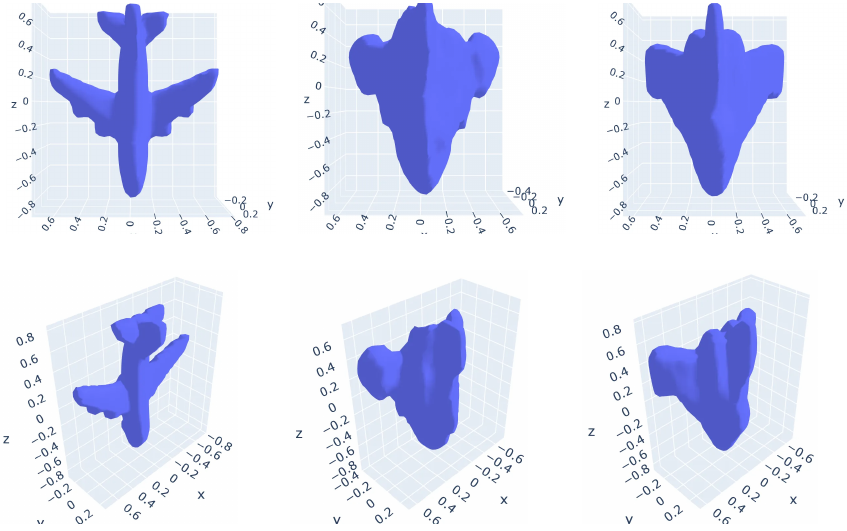}
    \caption{The reconstructions with phaseless data: the left, middle, and right columns refer to the initial, the optimal recovery, and the ground truth target, respectively.}
    \label{phase}
\end{figure}

\begin{figure}[hbt!]
    \centering
    \begin{tabular}{cc}
    \includegraphics[width=.48\textwidth]{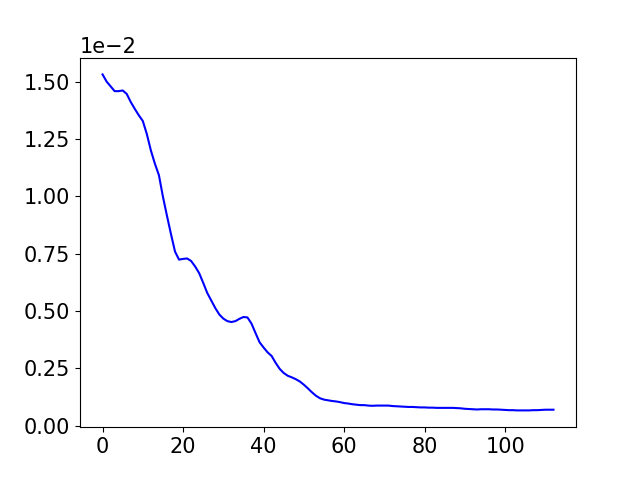}& \includegraphics[width=.48\textwidth]{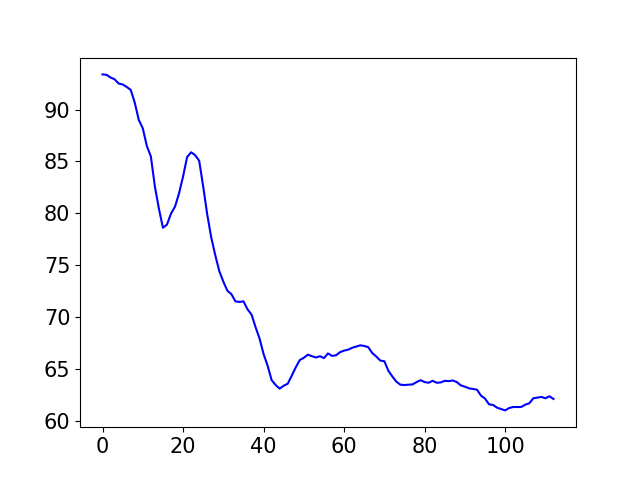}\\
     $\mathcal{L}(z^n)$ v.s. $n$  & $e$ v.s. $n$ \end{tabular}
    \caption{Convergence of the ADAM algorithm with phaseless data in terms of the loss $\mathcal{L}(z)$ (left) and the indicator error $e$ (right).}
    \label{phase:error}
\end{figure}

\section{Conclusion}\label{conclusion}

Based on the latent representation of surfaces, we have developed an efficient iterative method to solve the inverse obstacle scattering problem. The latent representation using well-trained neural networks can significantly reduce the dimensionality of the parameter space.  We derive rigorously the gradient of the loss using shape derivative, which can be evaluated via the adjoint method and automatic differentiation in Pytorch. With the gradient at hand, we can employ the ADAM algorithm to minimize the objective function. We analyzed the convergence of the algorithm by establishing the Lipschitz property of the gradient of the loss. Two numerical experiments show the high efficiency of the proposed method. The numerical results show that the reconstruction errors are highly robust with respect to the noise (up to 40\%), even though it is extremely ill-posed and sensitive to noise. In sum, the proposed algorithm has considerable potential for solving the inverse scattering problem in specific application domains.

There are several avenues for further research. First it is of much interest to apply the proposed method to other scenarios, e.g., time-dependent wave scattering and high-frequency wave scattering. Conceptually the proposed method applies equally well, since the latent surface representation is independent of the forward map. However, in these more challenging scenarios, the associated forward and adjoint problems are computationally much more demanding to resolve. Further, to fully harness the high-frequency data for high-resolution reconstructions, it is also crucial to have a high-resolution yet expressive latent space representation, which calls for abundant high-quality training data. Second, it is of much interest to analyze theoretical properties of the proposed method, including sample complexity analysis of learning the latent space representation (the prior), regularizing property of the regularized reconstruction and the dynamics of training etc. Third and last, the case of multiple inclusions deserves further investigations, especially when the number of components is unknown and the inclusions are multiscale in nature (i.e., the inclusions of different sizes).

\begin{appendices}
\section{Preliminaries on fractional-order Sobolev spaces} 

In this appendix, we collect several preliminary results on fractional order Sobolev spaces. Recall that for $s\in\mathbb{R}_+$, with $s=m+\sigma$, $m\in\mathbb{N}\cup \{0\}$ and $\sigma\in(0,1)$, the norm $H^s(\Gamma)$ is defined by 
\begin{align*}
        \|u\|_{H^{s}(\Gamma)}^2=&\|u\|_{H^{m}(\Gamma)}^2+\sum_{|\alpha|=m}\int_{\Gamma}\int_{\Gamma}\frac{|D^{\alpha} u(x)-D^{\alpha}u(y)|^2}{|x-y|^{(2+2\sigma)}}{\rm d}s(x){\rm d}s(y),
\end{align*}
via the so-called Sobolev-slobodeckij seminorm.

\begin{lemma}\label{fg}
For $f,g\in H^{s}(\Gamma)\cap C^{\lfloor s \rfloor}(\Gamma), s>0\in \mathbb{R} \backslash \mathbb{N}$, there holds
\begin{align*}
    \|fg\|_{H^{s}(\Gamma)}\le C\big(\|f\|_{C^{\lfloor s \rfloor}(\Gamma)}\|g\|_{H^{s}(\Gamma)}+\|g\|_{C^{\lfloor s \rfloor}(\Gamma)}\|f\|_{H^{s}(\Gamma)}+\|g\|_{C^{\lfloor s \rfloor}(\Gamma)}\|f\|_{C^{\lfloor s \rfloor}(\Gamma)}\big).
\end{align*}
\end{lemma}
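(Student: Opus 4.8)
The plan is to reduce the product estimate to its three defining pieces — the $H^m$-seminorms of $fg$ for $m=\lfloor s\rfloor$, the Sobolev--Slobodeckij seminorm of the top derivatives $D^\alpha(fg)$, and the lower-order $H^{m}$ contribution — and bound each by the Leibniz rule followed by elementary inequalities. First I would expand $D^\alpha(fg) = \sum_{\beta\le\alpha}\binom{\alpha}{\beta} D^\beta f\, D^{\alpha-\beta}g$ for $|\alpha|=m$. In each summand one of the two factors has order $\le m = \lfloor s\rfloor$, so it may be put in $C^{\lfloor s\rfloor}(\Gamma)$ and pulled out in sup-norm, while the other factor, of order $\le m$, is controlled in $L^2(\Gamma)\subset H^0(\Gamma)$; this handles the integer part $\|fg\|_{H^m(\Gamma)}\le C(\|f\|_{C^m}\|g\|_{H^m}+\|g\|_{C^m}\|f\|_{H^m})$, which already accounts for the first two terms on the right-hand side (the pure $C^m\cdot C^m$ term will only be needed to absorb cross terms below).

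The substantive step is the fractional seminorm with $\sigma = s-m\in(0,1)$: for $|\alpha|=m$ one must estimate
\begin{align*}
\int_\Gamma\int_\Gamma \frac{|D^\alpha(fg)(x)-D^\alpha(fg)(y)|^2}{|x-y|^{2+2\sigma}}\,{\rm d}s(x)\,{\rm d}s(y).
\end{align*}
Again expanding by Leibniz and using the triangle inequality, each difference $D^\beta f(x)D^{\alpha-\beta}g(x) - D^\beta f(y)D^{\alpha-\beta}g(y)$ is split in the standard way as
\begin{align*}
D^\beta f(x)\big(D^{\alpha-\beta}g(x)-D^{\alpha-\beta}g(y)\big) + D^{\alpha-\beta}g(y)\big(D^\beta f(x)-D^\beta f(y)\big).
\end{align*}
In the first piece $\|D^\beta f\|_{C(\Gamma)}\le\|f\|_{C^m(\Gamma)}$ comes out in sup-norm and what remains is the Sobolev--Slobodeckij seminorm of $D^{\alpha-\beta}g$; since $|\alpha-\beta|\le m$ this is bounded by $\|g\|_{H^s(\Gamma)}$ (seminorms of lower-order derivatives are dominated by the full $H^s$-norm on a bounded $C^2$ manifold). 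The symmetric piece gives $\|g\|_{C^m(\Gamma)}\|f\|_{H^s(\Gamma)}$. Squaring, using $(a+b)^2\le 2a^2+2b^2$, and integrating yields exactly the first two terms of the claimed bound (the $C^m\cdot C^m$ term appears when one bounds, say, $\|f\|_{H^s}\le C(\|f\|_{C^m}+|f|_{H^s\text{-seminorm}})$ and collects, or it can simply be kept as slack).

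The main obstacle — really the only point requiring care — is the geometry of $\Gamma$: the seminorm is defined through the chordal distance $|x-y|$ on the embedded surface, so the "Leibniz on a difference quotient" manipulation must be justified intrinsically, using that $\Gamma$ is a compact $C^2$ (indeed here $C^5$) hypersurface on which tangential derivatives, local charts, and a partition of unity are available, and on which the induced metric is uniformly equivalent to the Euclidean chordal metric at small scales so that the kernel $|x-y|^{-(2+2\sigma)}$ behaves as in $\mathbb{R}^2$. Once one fixes a finite atlas and works chart by chart, all estimates above are the classical flat-space product rule for $H^s$, $s\notin\mathbb{N}$, and the lemma follows by summing over the charts; the constant $C$ then depends only on $\Gamma$, $s$, and the chosen atlas.
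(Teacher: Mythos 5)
Your proposal is correct and rests on the same key device as the paper's proof: splitting $f(x)g(x)-f(y)g(y)=f(x)\bigl(g(x)-g(y)\bigr)+\bigl(f(x)-f(y)\bigr)g(y)$ inside the Sobolev--Slobodeckij seminorm and pulling the bounded factor out in sup-norm, combined with the Leibniz rule for the integer-order part. The only difference is organizational: you run this once at the top order $m=\lfloor s\rfloor$ for general $s$ (using $\|D^{\gamma}u\|_{H^{\sigma}(\Gamma)}\le C\|u\|_{H^{s}(\Gamma)}$ for $|\gamma|\le m$), whereas the paper first proves the case $s\in(0,1)$ and then reduces $s\in(1,2)$ (and higher $s$, ``similarly'') to it by applying that case to the products $g\,D^{\alpha}f$ and $f\,D^{\alpha}g$; your remarks on charts and the chordal distance address a point the paper leaves implicit.
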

\begin{proof}
If $0<s<1$, by definition, 
\begin{align*}
    \|fg\|_{H^{s}(\Gamma)}^2=\|fg\|_{L^{2}(\Gamma)}^2+\int_{\Gamma}\int_{\Gamma}\frac{|f(x)g(x)-f(y)g(y)|^2}{|x-y|^{2+2s}}{\rm d}s(x){\rm d}s(y).
\end{align*}
We denote the second term ($H^s(\Gamma)$ semi-norm) by ${\rm I}$. Then the identity $f(x)g(x)-f(y)g(y)=f(x)(g(x)-g(y))+(f(x)-f(y))g(y)$ and the triangle inequality gives
\begin{align*}
    {\rm I} \le&C\|f\|_{C(\Gamma)}^2\int_{\Gamma}\int_{\Gamma}\frac{|g(x)-g(y)|^2}{|x-y|^{(2+2s)}}{\rm d}s(x){\rm d}s(y)+C\|g\|_{C(\Gamma)}^2\int_{\Gamma}\int_{\Gamma}\frac{|f(x)-f(y)|^2}{|x-y|^{(2+2s)}}{\rm d}s(x){\rm d}s(y)\\
    =&C\|f\|_{C(\Gamma)}^2(\|g\|_{H^{s}(\Gamma)}^2-\|g\|_{L^{2}(\Gamma)}^2)+C\|g\|_{C(\Gamma)}^2(\|f\|_{H^{s}(\Gamma)}^2-\|f\|_{L^{2}(\Gamma)}^2).
\end{align*}
This implies 
the desired assertion for $s\in(0,1)$.
When $1<s<2$, let  $t=s-1$. Then the inequality for $s\in(0,1)$ implies
\begin{align*}
    &\|fg\|_{H^{s}(\Gamma)}
    \le C\Big(\|fg\|_{H^{1}(\Gamma)}+\sum_{|\alpha|=1}\|gD^\alpha f\|_{H^{t}(\Gamma)}+\sum_{|\alpha|=1}\|fD^\alpha g\|_{H^{t}(\Gamma)}\Big)\\
    \le&C\Big(\|fg\|_{H^{1}(\Gamma)}+\|g\|_{C^{1}(\Gamma)}\sum_{|\alpha|=1}\|D^\alpha f\|_{H^{t}(\Gamma)}+\|g\|_{H^{t}(\Gamma)}\sum_{|\alpha|=1}\|D^\alpha f\|_{C^{1}(\Gamma)}+\|g\|_{C^{1}(\Gamma)}\sum_{|\alpha|=1}\|D^\alpha f\|_{C^{1}(\Gamma)}\\
    &+\|f\|_{H^{t}(\Gamma)}\sum_{|\alpha|=1}\|D^\alpha g\|_{C^{1}(\Gamma)}+\|f\|_{C^{1}(\Gamma)}\sum_{|\alpha|=1}\|D^\alpha g\|_{H^{t}(\Gamma)}+\|D^\alpha g\|_{C^{1}(\Gamma)}\sum_{|\alpha|=1}\|f\|_{C^{1}(\Gamma)}\Big)\\
    \le&C\big(\|f\|_{C^{1}(\Gamma)}\|g\|_{C^{1}(\Gamma)}+\|f\|_{H^{s}(\Gamma)}\|g\|_{C^{1}(\Gamma)}+\|f\|_{C^{1}(\Gamma)}\|g\|_{H^{s}(\Gamma)}\big).
\end{align*}
The case for $s>2$ can be proved similarly.
\end{proof}

\begin{lemma}\label{fraction}
If $s=m+\sigma,s\in\mathbb{R}_+,\, m\in \mathbb{N}\cup\{0\}$, $0<\sigma<1$, $u\in C^{m+1}(\Gamma)$, then
\begin{eqnarray*}
    \|u\|_{H^{s}(\Gamma)}\le C\|u\|_{H^{m+1}(\Gamma)}\quad \mbox{and} \quad
    \|u\|_{H^{m+1}(\Gamma)} \le C\|u\|_{C^{m+1}(\Gamma)}.
\end{eqnarray*}
\end{lemma}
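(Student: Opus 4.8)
The plan is to establish the two inequalities separately: the second is elementary, and the first is the genuine continuous embedding $H^{m+1}(\Gamma)\hookrightarrow H^{s}(\Gamma)$ for $s=m+\sigma<m+1$.

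For the bound $\|u\|_{H^{m+1}(\Gamma)}\le C\|u\|_{C^{m+1}(\Gamma)}$, I would use that $\Gamma$ is a compact $C^{2}$ (indeed, under the standing assumptions, $C^{5}$) hypersurface, hence has finite surface measure $|\Gamma|<\infty$, and that on a fixed finite atlas the covariant derivatives $D^{\alpha}$, $|\alpha|\le m+1$, are bounded linear combinations of the chart partials with coefficients depending only on the (bounded) derivatives of the transition maps. Then $\|u\|_{H^{m+1}(\Gamma)}^{2}=\sum_{|\alpha|\le m+1}\|D^{\alpha}u\|_{L^{2}(\Gamma)}^{2}\le |\Gamma|\sum_{|\alpha|\le m+1}\|D^{\alpha}u\|_{C(\Gamma)}^{2}\le C\|u\|_{C^{m+1}(\Gamma)}^{2}$.

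For $\|u\|_{H^{s}(\Gamma)}\le C\|u\|_{H^{m+1}(\Gamma)}$, recall from the appendix that $\|u\|_{H^{s}(\Gamma)}^{2}=\|u\|_{H^{m}(\Gamma)}^{2}+\sum_{|\alpha|=m}[D^{\alpha}u]_{\sigma}^{2}$ with $[v]_{\sigma}^{2}:=\int_{\Gamma}\int_{\Gamma}|v(x)-v(y)|^{2}|x-y|^{-(2+2\sigma)}\,{\rm d}s(x){\rm d}s(y)$. Since $\|u\|_{H^{m}(\Gamma)}\le\|u\|_{H^{m+1}(\Gamma)}$ is immediate, it remains to show $[v]_{\sigma}\le C\|v\|_{H^{1}(\Gamma)}$ for $v=D^{\alpha}u\in H^{1}(\Gamma)$. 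I would split the double integral into a far region $\{|x-y|\ge\delta\}$ and a near-diagonal region $\{|x-y|<\delta\}$, with $\delta>0$ fixed smaller than a Lebesgue number of a finite open cover of $\Gamma$. On the far region the kernel is bounded by $\delta^{-(2+2\sigma)}$, so that contribution is at most $2\delta^{-(2+2\sigma)}\int_{\Gamma}\int_{\Gamma}|v(x)-v(y)|^{2}\,{\rm d}s\,{\rm d}s\le C|\Gamma|\|v\|_{L^{2}(\Gamma)}^{2}$. On the near-diagonal region both points lie in a common chart; one flattens $\Gamma$, using that the pullback metric is comparable to the Euclidean one so that the distance $|x-y|$ and the parameter-domain distance are equivalent there, reducing matters to the Euclidean estimate $\int_{B}\int_{B}|w(x)-w(y)|^{2}|x-y|^{-(2+2\sigma)}\,{\rm d}x\,{\rm d}y\le C\|w\|_{H^{1}(B)}^{2}$ on a ball $B\subset\mathbb{R}^{2}$. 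This last inequality I would prove by extending $w$ to $\widetilde w\in H^{1}(\mathbb{R}^{2})$ with $\|\widetilde w\|_{H^{1}(\mathbb{R}^{2})}\le C\|w\|_{H^{1}(B)}$ and invoking Plancherel: the Gagliardo seminorm over $B$ is bounded by that over $\mathbb{R}^{2}$, which equals (up to a constant) $\int_{\mathbb{R}^{2}}|\xi|^{2\sigma}|\widehat{\widetilde w}(\xi)|^{2}\,{\rm d}\xi\le\int_{\mathbb{R}^{2}}(1+|\xi|^{2})|\widehat{\widetilde w}(\xi)|^{2}\,{\rm d}\xi=\|\widetilde w\|_{H^{1}(\mathbb{R}^{2})}^{2}$, using $|\xi|^{2\sigma}\le 1+|\xi|^{2}$ for $0<\sigma\le 1$. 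Summing over the finite atlas and over $|\alpha|=m$ gives the claim. A shorter alternative is to quote that $H^{s}(\Gamma)=[H^{m}(\Gamma),H^{m+1}(\Gamma)]_{\sigma}$ with equivalent norms and use $\|u\|_{H^{s}}\le C\|u\|_{H^{m}}^{1-\sigma}\|u\|_{H^{m+1}}^{\sigma}\le C\|u\|_{H^{m+1}}$.

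I expect the main obstacle to be the geometric bookkeeping in the second inequality: passing from the intrinsic double-integral seminorm on the curved surface $\Gamma$ to the flat model on a Euclidean ball, and in particular justifying the equivalence of intrinsic and Euclidean distances on the near-diagonal region together with the uniform boundedness of the chart changes. Everything else — the far-region estimate, the Fourier-side comparison, and the entire second inequality — is routine once $\Gamma$ is known to be a compact hypersurface of sufficient (here $C^{5}$) regularity.
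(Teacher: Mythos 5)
Your proposal is correct and follows the same overall structure as the paper's proof: both bound $\|u\|_{H^{m+1}(\Gamma)}$ by $\|u\|_{C^{m+1}(\Gamma)}$ trivially using the finiteness of the surface measure, and both reduce the first inequality to the embedding $H^{1}(\Gamma)\hookrightarrow H^{\sigma}(\Gamma)$ applied to the top-order derivatives $D^{\alpha}u$, $|\alpha|=m$, after splitting off the Sobolev--Slobodeckij seminorm. The only difference is how that embedding is obtained: the paper simply cites \cite[Proposition 2.2]{di2012hitchhikers}, whereas you prove it from scratch via the far/near-diagonal splitting of the double integral, localization to charts with flattening, an $H^{1}$ extension, and the Fourier bound $|\xi|^{2\sigma}\le 1+|\xi|^{2}$ (or, alternatively, by interpolation $H^{s}=[H^{m},H^{m+1}]_{\sigma}$). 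Your longer route is not wasted effort: the cited proposition is stated for Euclidean domains, so transferring it to the compact surface $\Gamma$ implicitly requires exactly the chart-and-distance-equivalence bookkeeping you spell out (and which you correctly identify as the only delicate point); the paper's version buys brevity at the cost of leaving that localization implicit, while yours is self-contained.
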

\begin{proof}
By the definition of fractional-order Sobolev spaces, from \cite[Proposition 2.2]{di2012hitchhikers}, we have
\begin{align*}
    \|u\|_{H^{s}(\Gamma)}^2
    =&\|u\|_{H^{m}(\Gamma)}^2+\sum_{|\alpha|=m}\|D^{\alpha} u\|_{H^{\sigma}(\Gamma)}^2-\sum_{|\alpha|=m}\|D^{\alpha} u\|_{L^{2}(\Gamma)}^2\\
    \le&\|u\|_{H^{m}(\Gamma)}^2+C\sum_{|\alpha|=m}\|D^{\alpha} u\|_{H^{1}(\Gamma)}^2-\sum_{|\alpha|=m}\|D^{\alpha} u\|_{L^{2}(\Gamma)}^2
    \leq C\|u\|_{H^{m+1}(\Gamma)}^2.
\end{align*}
Furthermore,
\begin{align*}
    \|u\|_{H^{m+1}(\Gamma)}^2=&C\sum_{|\alpha|\le m+1}\|D^\alpha u\|_{L^2(\Gamma)}^2 
    \le C\|u\|_{C^{m+1}(\Gamma)}^2.
\end{align*}
This completes the proof of the lemma.
\end{proof}

\end{appendices}

\bibliographystyle{abbrv}
\bibliography{ref}

\begin{thebibliography}{10}

\bibitem{AdamsFournier:2003}
R.~A. Adams and J.~J.~F. Fournier.
\newblock {\em {Sobolev Spaces}}.
\newblock Elsevier/Academic Press, Amsterdam, second edition, 2003.

\bibitem{AmmariChowZou:2016}
H.~Ammari, Y.~T. Chow, and J.~Zou.
\newblock Phased and phaseless domain reconstructions in the inverse scattering
  problem via scattering coefficients.
\newblock {\em SIAM J. Appl. Math.}, 76(3):1000--1030, 2016.

\bibitem{AmmariGarnier:2012}
H.~Ammari, J.~Garnier, V.~Jugnon, and H.~Kang.
\newblock Stability and resolution analysis for a topological derivative based
  imaging functional.
\newblock {\em SIAM J. Control Optim.}, 50(1):48--76, 2012.

\bibitem{audibert2022accelerated}
L.~Audibert, H.~Haddar, and X.~Liu.
\newblock An accelerated level-set method for inverse scattering problems.
\newblock {\em SIAM J. Imag. Sci.}, 15(3):1576--1600, 2022.

\bibitem{betcke2021bempp}
T.~Betcke and M.~Scroggs.
\newblock Bempp-cl: A fast python based just-in-time compiling boundary element
  library.
\newblock {\em J. Open Source Software}, 6(59):2879--2879, 2021.

\bibitem{CakoniColton:2006}
F.~Cakoni and D.~Colton.
\newblock {\em {Qualitative Methods in Inverse Scattering Theory}}.
\newblock Springer-Verlag, Berlin, 2006.
\newblock An introduction.

\bibitem{carpio2020bayesian}
A.~Carpio, S.~Iakunin, and G.~Stadler.
\newblock Bayesian approach to inverse scattering with topological priors.
\newblock {\em Inverse Problems}, 36(10):105001, 2020.

\bibitem{chang2015shapenet}
A.~X. Chang, T.~Funkhouser, L.~Guibas, P.~Hanrahan, Q.~Huang, Z.~Li,
  S.~Savarese, M.~Savva, S.~Song, H.~Su, J.~Xiao, L.~Yi, and F.~Yu.
\newblock Shapenet: An information-rich 3d model repository.
\newblock Preprint, arXiv:1512.03012, 2015.

\bibitem{Chen_2013}
J.~Chen, Z.~Chen, and G.~Huang.
\newblock Reverse time migration for extended obstacles: acoustic waves.
\newblock {\em Inverse Problems}, 29(8):085005, 2013.

\bibitem{Chen_2013_1}
J.~Chen, Z.~Chen, and G.~Huang.
\newblock Reverse time migration for extended obstacles: electromagnetic waves.
\newblock {\em Inverse Problems}, 29(8):085006, 2013.

\bibitem{chen2019convergence}
X.~Chen, S.~Liu, R.~Sun, and M.~Hong.
\newblock On the convergence of a class of {Adam}-type algorithms for
  non-convex optimization.
\newblock In {\em International Conference on Learning Representations}, 2019.

\bibitem{chen2020review}
X.~Chen, Z.~Wei, M.~Li, and P.~Rocca.
\newblock A review of deep learning approaches for inverse scattering problems.
\newblock {\em Progress In Electromagnetics Research}, 167:67--81, 2020.

\bibitem{colton2000regularized}
D.~Colton, K.~Giebermann, and P.~Monk.
\newblock A regularized sampling method for solving three-dimensional inverse
  scattering problems.
\newblock {\em SIAM J. Sci. Comput.}, 21(6):2316--2330, 2000.

\bibitem{ColtonKirsch:1996}
D.~Colton and A.~Kirsch.
\newblock A simple method for solving inverse scattering problems in the
  resonance region.
\newblock {\em Inverse Problems}, 12(4):383--393, 1996.

\bibitem{colton1998inverse}
D.~Colton and R.~Kress.
\newblock {\em {Inverse Acoustic and Electromagnetic Scattering Theory}}.
\newblock Springer-Verlag, Berlin, second edition, 1998.

\bibitem{Delfour2011}
M.~C. Delfour and J.-P. Zol\'{e}sio.
\newblock {\em {Shapes and Geometries}}.
\newblock SIAM, Philadelphia, PA, second edition, 2011.

\bibitem{di2012hitchhikers}
E.~Di~Nezza, G.~Palatucci, and E.~Valdinoci.
\newblock Hitchhiker's guide to the fractional {S}obolev spaces.
\newblock {\em Bull. Sci. Math.}, 136(5):521--573, 2012.

\bibitem{dorn2006level}
O.~Dorn and D.~Lesselier.
\newblock Level set methods for inverse scattering.
\newblock {\em Inverse Problems}, 22(4):R67--R131, 2006.

\bibitem{DouLiu:2022}
F.~Dou, X.~Liu, S.~Meng, and B.~Zhang.
\newblock Data completion algorithms and their applications in inverse acoustic
  scattering with limited-aperture backscattering data.
\newblock {\em J. Comput. Phys.}, 469:111550, 17, 2022.

\bibitem{fan2019solving}
Y.~Fan and L.~Ying.
\newblock Solving inverse wave scattering with deep learning.
\newblock {\em Ann. Math. Sci. Appl.}, 7(1):23--48, 2022.

\bibitem{feijoo2004new}
G.~R. Feijoo.
\newblock A new method in inverse scattering based on the topological
  derivative.
\newblock {\em Inverse Problems}, 20(6):1819--1840, 2004.

\bibitem{gonzalez2010measurement}
{\'A}.~Gonz{\'a}lez.
\newblock Measurement of areas on a sphere using {F}ibonacci and
  latitude--longitude lattices.
\newblock {\em Math. Geosci.}, 42:49--64, 2010.

\bibitem{Goodfellow2016}
I.~Goodfellow, Y.~Bengio, and A.~Courville.
\newblock {\em Deep Learning}.
\newblock MIT Press, 2016.

\bibitem{goodfellow2014generative}
I.~Goodfellow, J.~Pouget-Abadie, M.~Mirza, B.~Xu, D.~Warde-Farley, S.~Ozair,
  A.~Courville, and Y.~Bengio.
\newblock Generative adversarial nets.
\newblock In {\em Advances in Neural Information Processing Systems},
  volume~27, 2014.

\bibitem{guillard2021deepmesh}
B.~Guillard, E.~Remelli, A.~Lukoianov, S.~R. Richter, T.~Bagautdinov, P.~Baque,
  and P.~Fua.
\newblock Deepmesh: Differentiable iso-surface extraction.
\newblock Preprint, arXiv:2106.11795, 2021.

\bibitem{guo2021construct}
R.~Guo and J.~Jiang.
\newblock Construct deep neural networks based on direct sampling methods for
  solving electrical impedance tomography.
\newblock {\em SIAM J. Sci. Comput.}, 43(3):B678--B711, 2021.

\bibitem{Hettlich:1995}
F.~Hettlich.
\newblock Fr\'{e}chet derivatives in inverse obstacle scattering.
\newblock {\em Inverse Problems}, 11(2):371--382, 1995.

\bibitem{hettlich1999second}
F.~Hettlich and W.~Rundell.
\newblock A second degree method for nonlinear inverse problems.
\newblock {\em SIAM J. Numer. Anal.}, 37(2):587--620, 2000.

\bibitem{Hohage:1999}
T.~Hohage.
\newblock Convergence rates of a regularized {N}ewton method in sound-hard
  inverse scattering.
\newblock {\em SIAM J. Numer. Anal.}, 36(1):125--142, 1999.

\bibitem{ItoJin:2015}
K.~Ito and B.~Jin.
\newblock {\em {Inverse Problems: Tikhonov Theory and Algorithms}}.
\newblock World Scientific Publishing Co. Pte. Ltd., Hackensack, NJ, 2015.

\bibitem{ito2013direct}
K.~Ito, B.~Jin, and J.~Zou.
\newblock A direct sampling method for inverse electromagnetic medium
  scattering.
\newblock {\em Inverse Problems}, 29(9):095018, 2013.

\bibitem{ItoJinZhou:2013}
K.~Ito, B.~Jin, and J.~Zou.
\newblock A two-stage method for inverse medium scattering.
\newblock {\em J. Comput. Phys.}, 237:211--223, 2013.

\bibitem{ivanyshyn2007shape}
O.~Ivanyshyn.
\newblock Shape reconstruction of acoustic obstacles from the modulus of the
  far field pattern.
\newblock {\em Inverse Problems and Imaging}, 1(4):609--622, 2007.

\bibitem{khoo2019switchnet}
Y.~Khoo and L.~Ying.
\newblock Switchnet: a neural network model for forward and inverse scattering
  problems.
\newblock {\em SIAM J. Sci. Comput.}, 41(5):A3182--A3201, 2019.

\bibitem{KingmaBa:2015}
D.~P. Kingma and J.~Ba.
\newblock Adam: A method for stochastic optimization.
\newblock In {\em 3rd International Conference for Learning Representations},
  San Diego, 2015.

\bibitem{kingma2013auto}
D.~P. Kingma and M.~Welling.
\newblock Auto-encoding variational {B}ayes.
\newblock {\em arXiv preprint arXiv:1312.6114}, 2013.

\bibitem{Kirsch:1993}
A.~Kirsch.
\newblock The domain derivative and two applications in inverse scattering
  theory.
\newblock {\em Inverse Problems}, 9(1):81--96, 1993.

\bibitem{kirsch2002music}
A.~Kirsch.
\newblock The {MUSIC}-algorithm and the factorization method in inverse
  scattering theory for inhomogeneous media.
\newblock {\em Inverse problems}, 18(4):1025, 2002.

\bibitem{KirschGrinberg:2008}
A.~Kirsch and N.~Grinberg.
\newblock {\em {The Factorization Method for Inverse Problems}}.
\newblock Oxford University Press, Oxford, 2008.

\bibitem{Klibanov:2014}
M.~V. Klibanov.
\newblock Phaseless inverse scattering problems in three dimensions.
\newblock {\em SIAM J. Appl. Math.}, 74(2):392--410, 2014.

\bibitem{KlibanovKolesovNguyen:2019}
M.~V. Klibanov, A.~E. Kolesov, and D.-L. Nguyen.
\newblock Convexification method for an inverse scattering problem and its
  performance for experimental backscatter data for buried targets.
\newblock {\em SIAM J. Imaging Sci.}, 12(1):576--603, 2019.

\bibitem{KressRundell:1997}
R.~Kress and W.~Rundell.
\newblock Inverse obstacle scattering with modulus of the far field pattern as
  data.
\newblock In {\em Inverse problems in medical imaging and nondestructive
  testing ({O}berwolfach, 1996)}, pages 75--92. Springer, Vienna, 1997.

\bibitem{LiLiLiuLiu:2015}
J.~Li, P.~Li, H.~Liu, and X.~Liu.
\newblock Recovering multiscale buried anomalies in a two-layered medium.
\newblock {\em Inverse Problems}, 31(10):105006, 28, 2015.

\bibitem{li2020extended}
Z.~Li, Z.~Deng, and J.~Sun.
\newblock Extended-sampling-{B}ayesian method for limited aperture inverse
  scattering problems.
\newblock {\em SIAM J. Imag. Sci.}, 13(1):422--444, 2020.

\bibitem{lorensen1987marching}
W.~E. Lorensen and H.~E. Cline.
\newblock Marching cubes: A high resolution {3D} surface construction
  algorithm.
\newblock {\em ACM Siggraph Comput. Graphics}, 21(4):163--169, 1987.

\bibitem{nedelec2001acoustic}
J.-C. N\'{e}d\'{e}lec.
\newblock {\em {Acoustic and Electromagnetic Equations}}.
\newblock Springer-Verlag, New York, 2001.

\bibitem{NingHanZou:2023}
J.~Ning, F.~Han, and J.~Zou.
\newblock A direct sampling-based deep learning approach for inverse medium
  scattering problems.
\newblock {\em Inverse Problems}, 40(1):015005, 26, 2024.

\bibitem{Novikov:2015}
R.~G. Novikov.
\newblock Formulas for phase recovering from phaseless scattering data at fixed
  frequency.
\newblock {\em Bull. Sci. Math.}, 139(8):923--936, 2015.

\bibitem{park2019deepsdf}
J.~J. Park, P.~Florence, J.~Straub, R.~Newcombe, and S.~Lovegrove.
\newblock Deepsdf: Learning continuous signed distance functions for shape
  representation.
\newblock In {\em 2019 IEEE/CVF Conference on Computer Vision and Pattern
  Recognition (CVPR)}, pages 165--174, 2019.

\bibitem{RonnebergerFischer:2015}
O.~Ronneberger, P.~Fischer, and T.~Brox.
\newblock U-net: Convolutional networks for biomedical image segmentation.
\newblock In N.~Navab, J.~Hornegger, W.~M. Wells, and A.~F. Frangi, editors,
  {\em Medical Image Computing and Computer-Assisted Intervention -- MICCAI
  2015}, pages 234--241, Cham, 2015. Springer International Publishing.

\bibitem{simon1989second}
J.~Simon.
\newblock Second variations for domain optimization problems.
\newblock In {\em Control Theory of Distributed Parameter Systems and
  Applications}, volume~91, pages 361--378. Birkh{\"a}user, Basel, 1989.

\bibitem{smigaj2015solving}
W.~{\'S}migaj, T.~Betcke, S.~Arridge, J.~Phillips, and M.~Schweiger.
\newblock Solving boundary integral problems with {BEM}++.
\newblock {\em ACM Trans. Math. Software (TOMS)}, 41(2):1--40, 2015.

\bibitem{sokolowski1992introduction}
J.~Soko\l{o}wski and J.-P. Zol\'{e}sio.
\newblock {\em {Introduction to Shape Optimization: Shape Sensitivity
  Analysis}}.
\newblock Springer-Verlag, Berlin, 1992.

\bibitem{song2020score}
Y.~Song, J.~Sohl-Dickstein, D.~P. Kingma, A.~Kumar, S.~Ermon, and B.~Poole.
\newblock Score-based generative modeling through stochastic differential
  equations.
\newblock Preprint, arXiv:2011.13456, 2020.

\bibitem{wei2018deep}
Z.~Wei and X.~Chen.
\newblock Deep-learning schemes for full-wave nonlinear inverse scattering
  problems.
\newblock {\em IEEE Trans. Geosci. Remote Sens.}, 57(4):1849--1860, 2018.

\bibitem{winters2009sparsity}
D.~W. Winters, B.~D. Van~Veen, and S.~C. Hagness.
\newblock A sparsity regularization approach to the electromagnetic inverse
  scattering problem.
\newblock {\em IEEE Trans. Antennas Prop.}, 58(1):145--154, 2009.

\bibitem{xu2020deep}
K.~Xu, L.~Wu, X.~Ye, and X.~Chen.
\newblock Deep learning-based inversion methods for solving inverse scattering
  problems with phaseless data.
\newblock {\em IEEE Trans. Antennas Prop.}, 68(11):7457--7470, 2020.

\bibitem{Younes2019}
L.~Younes.
\newblock {\em {Shapes and Diffeomorphisms}}.
\newblock Springer, Berlin, Heidelberg, second edition, 2019.

\end{thebibliography}
\end{document}